\setlist[enumerate]{leftmargin=1.5em}
\setlist[itemize]{leftmargin=1.5em}
\providecommand{\MR}{\relax\ifhmode\unskip\space\fi MR }
\providecommand{\href}[2]{#2}
\definecolor{green}{rgb}{0,0.8,0} 
\newtheorem{maintheorem}{Theorem}
\newtheorem{theorem}{Theorem}[section]
\newtheorem{lemma}[theorem]{Lemma}
\newtheorem{proposition}[theorem]{Proposition}
\theoremstyle{definition}
\theoremstyle{remark}
\newtheorem{remark}[theorem]{Remark}
\numberwithin{equation}{section}
\newcommand{\nnrm}[1]{{\vert\kern-0.25ex\vert\kern-0.25ex\vert #1 
    \vert\kern-0.25ex\vert\kern-0.25ex\vert}}
\newcommand{\supp}{{\mathrm{supp}}\,}
\newcommand{\ud}{\mathrm{d}}
\newcommand{\nb}{\nabla}
\newcommand{\tht}{\theta}
\newcommand{\bbN}{\mathbb N}
\newcommand{\bbR}{\mathbb R}
\newcommand{\bbT}{\mathbb T}
\newcommand{\bbZ}{\mathbb Z}
\begin{document}

\title{Velocity  blow-up in $C^1\cap H^2$ for the 2D Euler equations}

\author{
Min Jun Jo\thanks{Department of Mathematics, The University of British Columbia. E-mail: mjjo@math.ubc.ca} \and Junha Kim\thanks{School of Mathematics, Korea Institute for Advanced Study. E-mail: junha02@kias.re.kr}}
\date{\today}

\renewcommand{\thefootnote}{\fnsymbol{footnote}}
\footnotetext{\emph{Key words: Ill-posedness, vorticity dyanmics, the Euler equations} \\
\emph{2020 AMS Mathematics Subject Classification:} 76B03, 76B47}

\renewcommand{\thefootnote}{\arabic{footnote}}


\maketitle


\begin{abstract}
We give a vorticity-dynamical proof of $C^1\cap H^2$-illposedness of the 2D Euler equations. Our construction shows that the unique Yudovich solution escapes both $C^1$ and $H^2$ instantaneously.
\end{abstract}


\section{Introduction}
\subsection{Main result}

We study the initial value problem for the Euler equations in $\bbR_+ \times \bbT^2 $:
\begin{equation}\label{Euler}
    \begin{cases}
        \partial_t u + u\cdot \nb u + \nb p=0, \\
        \operatorname{div} u=0,\\
        u(\cdot,0)=u_0,
    \end{cases}
\end{equation}
where $u_0$ is the initial data satisfying $\operatorname{div}u_0=0$. We establish the following theorem.
\begin{maintheorem}\label{thm1}
    There exists an initial $u_0 \in C^1 \cap H^2(\bbT^2)$ such that the unique Yudovich solution $u$ satisfies
\begin{equation*}
      \sup_{t\in(0,\delta)}\|u(t,\cdot)\|_{C^{1}}=\sup_{t\in(0,\delta)}\|u(t,\cdot)\|_{H^2}=\infty,  
\end{equation*}
for any $\delta>0$.
\end{maintheorem}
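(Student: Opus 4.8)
\emph{Setup.} I would work with the vorticity $\omega=\partial_1 u^2-\partial_2 u^1$, which for the Yudovich solution is transported by the unique measure-preserving flow $\Phi_t$ of $u$, so $\omega(t)=\omega_0\circ\Phi_t^{-1}$ and $u(t)=\nabla^\perp\Delta^{-1}\omega(t)$. On $\bbT^2$ one has $\Delta u=\nabla^\perp\omega$, hence $\|u(t)\|_{\dot H^2}=\|\nabla\omega(t)\|_{L^2}$, while $\|u(t)\|_{C^1}\ge\|\nabla u(t)\|_{L^\infty}$. It therefore suffices to construct $\omega_0$ with $u_0\in C^1\cap H^2$ such that, for every $\delta>0$, both $\sup_{(0,\delta)}\|\nabla u(t)\|_{L^\infty}=\infty$ and $\sup_{(0,\delta)}\|\nabla\omega(t)\|_{L^2}=\infty$. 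I would prove these by contradiction: if $\|\nabla u(t)\|_{L^\infty}\le M$ on $(0,\delta)$, then the flow is bi-Lipschitz there with $\|\nabla\Phi_t^{\pm1}\|_{L^\infty}\le e^{Mt}$, and I will show this is incompatible with the dynamics of the chosen datum; the $H^2$ statement is obtained from the same mechanism via its own contradiction hypothesis.

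\emph{The datum.} Take $\omega_0$ odd in both variables. This symmetry is preserved by \eqref{Euler}, makes the axes flow-invariant, fixes the origin as a hyperbolic-type stagnation point, and—by the classical Biot--Savart computation for odd--odd vorticity—makes $u$ near the origin equal to the hyperbolic field $x\mapsto\big(-\Lambda(|x|)\,x_1,\ \Lambda(|x|)\,x_2\big)$ up to an $O(\|\omega\|_{L^\infty})$ error, where $\Lambda(\rho)=c\int_{|y|>\rho}\frac{y_1 y_2}{|y|^4}\,\omega(y)\,dy$. I would then set $\omega_0=\sum_{\ell\ge\ell_0}\omega_0^{(\ell)}$, with $\omega_0^{(\ell)}$ an odd--odd bump supported in the dyadic annulus $A_\ell=\{2^{-\ell-1}\le|x|\le 2^{-\ell}\}$, of $L^\infty$-size $a_\ell$, but with the bulk of its amplitude placed in a higher angular mode ($\sin4\theta$, which contributes nothing to $\nabla u$ at the origin) so that its $\sin2\theta$-content—its contribution $\Lambda_\ell^{(0)}$ to $\Lambda$—is $\ll a_\ell$. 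The amplitudes are chosen so that
\[
\sum_\ell a_\ell^2<\infty,\qquad a_\ell\downarrow 0,\qquad \sum_\ell a_\ell=\infty,\qquad \sum_\ell\Lambda_\ell^{(0)}<\infty
\]
(for instance $a_\ell=(\ell^{1/2}\log\ell)^{-1}$, $\Lambda_\ell^{(0)}=\ell^{-2}$). Then $\omega_0\in C^0\cap H^1$, smooth off the origin, and $\|\nabla u_0\|_{L^\infty}\lesssim\|\omega_0\|_{L^\infty}+\sum_\ell\Lambda_\ell^{(0)}<\infty$, so $u_0\in C^1\cap H^2$. (Necessarily $u_0$ lies in no $C^{1,\alpha}$ and no $H^{2+\varepsilon}$, which is why the $\Lambda_\ell^{(0)}$ must be summable yet have fat logarithmic tails.)

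\emph{Moment growth.} Under the contradiction hypothesis the flow's bi-Lipschitz constant is $<2$ for $t$ small, so each $A_\ell$ stays inside $A_{\ell-1}\cup A_\ell\cup A_{\ell+1}$, the cells do not mix, and on $A_\ell$ the velocity is the hyperbolic field with rate $\Lambda_\ell(s):=\Lambda(2^{-\ell},s)\approx\sum_{m\le\ell}M_m(s)$, where $M_m(s)=c\int\frac{y_1y_2}{|y|^4}\omega^{(m)}(s,y)\,dy$ is the ($\tfrac1r$-weighted) $\sin2\theta$-moment of $\omega^{(m)}(s)$. Transporting $\omega^{(\ell)}$ by this hyperbolic field and integrating by parts gives the exact leak identity $\tfrac{d}{ds}M_\ell=c\,\Lambda_\ell\,N_\ell+(\text{Biot--Savart error})$, where $N_\ell(s)$ is the $\sin4\theta$-moment of $\omega^{(\ell)}(s)$: the $\theta$-dependent hyperbolic shear feeds $\sin4\theta$-content into $\sin2\theta$-content at rate $\propto\Lambda_\ell$. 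For $\ell$ large, $\Lambda_\ell(0)=\sum_{m\le\ell}\Lambda_m^{(0)}\ge\tfrac12\lambda_0$ with $\lambda_0:=\sum_m\Lambda_m^{(0)}>0$; under the hypothesis all time derivatives are $O(Ma_\ell)$, so for a short time $N_\ell\gtrsim a_\ell$ and $\Lambda_\ell\gtrsim\lambda_0$ persist, whence $\tfrac{d}{ds}M_\ell\ge c_* a_\ell$ and $M_\ell(t)\ge c_* a_\ell t$. Summing, $\Lambda_i(t)\ge c_* t\sum_{\ell_1\le\ell\le i}a_\ell\to\infty$ as $i\to\infty$, because $\sum_\ell a_\ell=\infty$; but $\Lambda_i(t)\le\|\nabla u(t)\|_{L^\infty}+O(1)\le M+O(1)$—a contradiction. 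Hence $\sup_{(0,\delta)}\|\nabla u(t)\|_{L^\infty}=\infty$. The same divergence of the hyperbolic rates $\Lambda_\ell$ at small scales forces the accumulated strain $\int_0^t\Lambda_\ell$, and with it the stretching factor on $\omega^{(\ell)}(t)$, to grow without bound in $\ell$; so $\|\nabla\omega(t)\|_{L^2}^2\ge\sum_\ell\|\nabla\omega^{(\ell)}(t)\|_{L^2}^2=\infty$, i.e.\ $\|u(t)\|_{H^2}=\infty$.

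\emph{Main obstacle.} The heart is the third step: making the leak identity quantitative with errors that are small \emph{uniformly in the infinitely many active scales}. One must control the non-hyperbolic part of the velocity near $A_\ell$—the far field of the inner cells, the high modes of the outer cells, and the self-interaction of cell $\ell$ beyond its hyperbolic part—uniformly in $\ell$ (forcing, e.g., the scales to be well separated); keep the annuli separated so that the off-diagonal pieces of $\Lambda$ neither flip the sign of $\tfrac{d}{ds}M_\ell$ nor dominate $M_\ell(t)$; and run all of this while juggling the four competing constraints on $\{a_\ell,\Lambda_\ell^{(0)}\}$—$\sum a_\ell^2<\infty$ for $\omega_0\in H^1$, $\sum\Lambda_\ell^{(0)}<\infty$ for $u_0\in C^1$, $a_\ell\to0$ for continuity, $\sum a_\ell=\infty$ for the divergence. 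A further subtlety is that $C^1$ and $H^2$ are not nested spaces, so the two conclusions—though both driven by the single mechanism $\Lambda_\ell(t)\to\infty$—must each be extracted by its own contradiction argument, the $H^2$ case requiring that the assumed bound still furnish enough short-time localization of the cells.
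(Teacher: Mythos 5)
Your proposal captures the right high-level ideas---odd--odd vorticity, dyadically separated scales, hyperbolic-near-origin velocity via the Biot--Savart representation, and a ``leak'' that converts the balanced angular content into the $\sin2\theta$-moment that drives the strain---but it diverges from the paper in two structural ways and leaves a genuine gap on the $H^2$ side.

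\emph{Different trigger for the strain.} You provide the baseline strain $\lambda_0$ by deliberately contaminating each scale with a small $\sin2\theta$-tail $\Lambda_\ell^{(0)}$ subject to $\sum_\ell\Lambda_\ell^{(0)}<\infty$. The paper instead keeps the balanced part $m(r,\theta)=f(r)g(\theta)$ with $g$ exactly mean-zero over the period (so $m$ contributes nothing to the strain at $t=0$), and adds a \emph{separate, one-signed, smooth} stirring bump $h$ supported away from the origin. The payoff is quantitative: the strain coming from $h$ is $\sim N_0$, which can be made arbitrarily large by pushing $\supp h$ inward, so the hyperbolic term in $\tfrac{\ud}{\ud t}M_\ell$ overwhelms the Biot--Savart error $O(\|\omega_0\|_{L^\infty}a_\ell)$ \emph{uniformly} in $\ell$ and in time, with constants that do not have to be threaded through the tail sum $\sum_\ell\Lambda_\ell^{(0)}$. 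In your scheme the comparable inequality $\lambda_0\gg\|\omega_0\|_{L^\infty}$ is not automatic: $\lambda_0$ must be summable (for $C^1$) while each $\Lambda_\ell^{(0)}\lesssim a_\ell$, so achieving both largeness of $\lambda_0$ and smallness of $\|\omega_0\|_{L^\infty}$ simultaneously requires a parameter tuning you do not carry out, and the errors must still be shown to keep every $\tfrac{\ud}{\ud t}M_m$ nonnegative so that $\Lambda_\ell$ stays bounded below. The paper also uses a dichotomous angular bump $g=\varphi(\theta)-\varphi(\tfrac{\pi}{2}-\theta)$ supported strictly inside $(\tfrac{\pi}{6},\tfrac{\pi}{3}+\theta_0)$ rather than $\sin4\theta$: this choice is what keeps $\supp\omega$ away from $R(2\Phi(t,x))$, killing the $\log(1+x_1/x_2)$ singularity in $E_2$ of the key lemma; with a full $\sin4\theta$ profile the support reaches the axes and this log-error is not under control.

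\emph{Contradiction versus quantitative.} You prove both conclusions by contradiction. The paper explicitly presents the avoidance of contradiction arguments as one of its contributions, and for good reason: the difficulty is sharpest in the $H^2$ part. Your contradiction hypothesis $\|u(t)\|_{H^2}\le M'$ does \emph{not} yield a Lipschitz bound on $u$ (in 2D $H^2\hookrightarrow W^{1,\infty}$ fails), so you do not get the bi-Lipschitz flow and hence not the ``cells do not mix'' localization that your whole moment-growth argument rests on. You flag this at the very end (``requiring that the assumed bound still furnish enough short-time localization'') but do not address it; as stated, the $H^2$ half of your proof is missing. The paper avoids the issue entirely by proving, \emph{unconditionally} from Yudovich theory and the key lemma, a quantitative lower bound $\Phi_2(T,x)/x_2\le C^*n^{-1/C_*}$ on the compression at scale $n$ with constants independent of $n$, $T$, and $\beta$, and then reads off $\|\omega(T)\|_{H^1}^2=\infty$ via Hardy's inequality; no $H^2$ hypothesis is ever assumed. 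The same unconditional hyperbolic description also yields the $C^1$ statement via a mean-value argument in time, with an explicit localization of where the blow-up occurs.

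In short: your plan identifies the correct mechanism and several of the correct obstacles, and the $C^1$ half could plausibly be completed along the lines you sketch (closer to the contradiction arguments of Bourgain--Li and Elgindi--Masmoudi). But the $H^2$ half as written has a real gap (no Lipschitz control from the contradiction hypothesis), the log-error near the axes is unaddressed for a $\sin4\theta$ profile, and the claim $u_0\in C^1$ is asserted rather than proved (the paper devotes Proposition~\ref{prop_initial} to exactly this). The paper's use of a large one-signed stirring term $h$ and a mean-zero angular bump, together with a quantitative, non-contradiction argument, is what closes all three of these gaps at once.
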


\begin{remark}
    At the vorticity level, our initial data is the sum of a \emph{balanced} target $m$ and a \emph{stirring} piece $h$:
\begin{equation}\label{eq_initial}
        \omega_0 (r,\theta)= m(r,\theta) + h(r,\theta),
\end{equation}
where $m$ is both periodic and mean-zero in $\theta$ while the sign of $h$ is one-sided on $[0,1]^2$. For more properties of our initial data, see Section~\ref{sec_initial} and Section~\ref{sec_proof}. Such a choice is inspired by \cite{EM}.
\end{remark}
\begin{remark}
    The inflation of $\|u(t,\cdot)\|_{C^1}$ occurs as the stirrer $h$ breaks down the symmetry of $m$.   Certain largeness of $h$ is required to serve as such a trigger.
\end{remark}

\subsection{Summary}
We summarize the contributions of our main result.
\begin{itemize}
    \item \textbf{Dual blow-up.} It is known that the 2D Euler equations are ill-posed in $C^1$ and $H^2$, respectively: see \cite{BL,EJ} for $H^2$, and see \cite{BL2,EM} for $C^1$. Our result establishes further ill-posedness in the finer space $C^1\cap H^2$, providing the simultaneous norm inflation in both $C^1$ and $H^2$.
    \item \textbf{Quantitative proof.} The finiteness of $\|u(t,\cdot)\|_{\operatorname{Lip}}$ is necessary to control the Lagrangian deformation induced by the gradient of the flow map $\Phi$. One may refer to Lemma 3.1 of \cite{BL2} and Section 3.2 of \cite{EM}. But the goal itself is to show the infiniteness of $\|u(t,\cdot)\|_{\operatorname{Lip}}$, which can be paradoxical for that Lagrangian approach. In the previous literature, such an issue was resolved via contradiction arguments. Our study delivers a quantitative proof with respect to time and the distance from the origin, letting us off the hook of proof by contradiction. 
    \item \textbf{Vorticity dynamics.} Our proof elucidates the non-hypothetical vorticity dynamics that occurs while the unique Yudovich solution escapes the target spaces. By the use of the key lemma in the next section, we measure how the vorticity is transported along the flow map. Indeed, we verify that each point vortex at the symmetric part $m$ in our initial data \eqref{eq_initial} runs through an approximate \emph{hyperbolic} scenario. 
    \item \textbf{Initially $C^1$ towards $H^2$ blow-up} As alluded to before, the size of $\|u(t,\cdot)\|_{\operatorname{Lip}}$ is inherently important in Lagrangian formulation.  In the preceding studies \cite{BL,EJ,JY}, the primary mechanism of $H^2$ blow-up was initial largeness of velocity gradients, namely $\|\nb u_0\|_{L^\infty}=\infty$. This immediately enforced the large Lagrangian deformation, leading to the $H^2$ blow-up.  Our work shows that there is a finer structure for causing the instantaneous $H^2$ blow-up without the largeness of $\nb u_0$.
    
\end{itemize}

\subsection{Background}

The Euler equations \eqref{Euler} are known to be well-posed in the Sobolev spaces $H^s$ for any $s>\frac{d}{2}+1$, where $d=2$ in our consideration. Heuristically, it follows from the a priori $H^s$ energy estimate
\begin{equation*}
    \frac{\ud}{\ud t} \|u\|_{H^s}^2 \lesssim \|\nb u\|_{L^\infty} \|u\|_{H^s}^2,
\end{equation*}
which can be obtained via Gagliardo-Nirenberg inequalities for integer $s$ or Kato-Ponce type commutator estimates \cite{KP} for non-integer $s$. If $s> \frac{d}{2}+1$, then the Sobolev embedding from $H^s$ to $W^{1,\infty}$ works and so the energy estimate can be closed. The case $s>\frac{d}{2}+1$ is called subcritical, and the case $s=\frac{d}{2}+1$ is called critical for the Euler equations; $H^2$ is a critical Sobolev space for \eqref{Euler}. It turned out that the 2D Euler equations were strongly ill-posed in $H^2$; see \cite{BL}. 

A remarkable feature of \cite{BL} was the use of the vorticity formulation for \eqref{Euler} in view of Lagrangian deformation associated with the flow map. Later, the proof of \cite{BL} was simplified in \cite{EJ} where a certain kind of \emph{key lemma} played a crucial role. The key lemma made its first appearance in \cite{KS} where the double-exponential growth of the vorticity gradient was shown. See Lemma 3.1 in \cite{KS} and Lemma 1 in \cite{EJ}.  Leveraging the odd symmetry of the vorticity, the key lemma gives an effective vorticity-dynamical description of the velocity via the Biot-Savart law. Aligned with the particular point of view related to vorticity formulation, many researchers considered the generalized SQG equation
\begin{equation}\label{gSQG}
\begin{cases}
    \partial_t \omega + u \cdot \nb 
    \omega = 0, \\
    u=\nb^{\perp}(-\Delta)^{-1+\alpha}  \omega,
\end{cases}  
\end{equation}
where $\alpha$ ranges from $0$ to $1/2$. One can check that taking $\nb^{\perp}=(-\partial_2,\partial_1)$ for \eqref{Euler}, the case $\alpha=0$ of the generalized SQG equation corresponds to the 2D Euler equations. In the periodic settings, the exponential growth of  $\|\nb \omega\|_{L^\infty}$ for the Euler case $\alpha=0$ was proved in \cite{Zlatos}, and the exponential growth of $\|\nb^2 \omega \|_{L^\infty}$ for any $0<\alpha<1$ was shown in \cite{HK}. Both results employed the key lemmas, and especially multiple key lemmas were used in \cite{HK} due to the additional fractional dissipation in the Biot-Savart law. It is noteworthy that the proof of strong illposedness of the 2D Euler equations was further developed in \cite{JY} to be fully quantitative, without using any contradiction argument. 

Another well-posedness theory of crucial importance is Yudovich theory based on the $L^\infty$ vorticity. In the two-dimensions, any $L^p$ norm of the vorticity $\omega$ of \eqref{gSQG} with $p\in[1,\infty]$ is conserved in time, fluid particles being transported along the flow map induced by the velocity field. The target of this paper is \eqref{Euler} in the periodic setting, which is the $\alpha=0$ case of \eqref{gSQG}. In such a case, the velocity $u$ satisfies the so-called log-Lipschitz continuity
\begin{equation}\label{logLip}
    |u(x)-u(y)|\leq C\|\omega_0\|_{L^\infty} |x-y|\log\left(1+\frac{1}{|x-y|}\right),
\end{equation}
knowing that $\|\omega(t,\cdot)\|_{L^\infty}=\|\omega_0\|_{L^\infty}$. For the proof of \eqref{logLip}, see \cite{MP}. The log-Lipschitz continuity is sufficient to guarantee the unique existence of the flow map $\Phi$ in \eqref{flow}, and so the solution triple $(u,\Phi,\omega)$ can be uniquely determined provided that $\|\omega_0\|_{L^\infty}<\infty.$ A natural question arises from \eqref{logLip} about whether $\|u\|_{\operatorname{Lip}}$ actually bears the logarithmic singularity, i.e., the log-Lipschitz continuity is sharp. Investigating $C^1$-ill/wellposedness of the 2D Euler equations is related to that question. One may notice further that, other than just $H^2$, any Sobolev spaces $W^{\frac{2}{p}+1,p}$ with $p\in(1,\infty]$ is critical for \eqref{Euler} in terms of the energy estimates, and the end case $p=\infty$ includes $C^1$. In \cite{BL2} and \cite{EM}, independently, it was proved that \eqref{Euler} is indeed illposed in $C^1$.

The purpose of this paper is to prove that the 2D Euler equations are illposed even in the finer space $C^1\cap H^2$, by showing that instantenously the Yudovich solution escapes both $C^1$ and $H^2$. The \emph{dual} blow-up phenomenon might suggest the delicacy of the Euler dynamics. 

As a final remark, the vorticity-descriptive nature of our proof allows us to quantify the actual dynamics. We partially realize the formal blow-up rate $\|\nb u(t,\cdot)\|_{L^\infty}\gtrsim \frac{1}{t}$ given in \cite{EJ2} in an indirect fashion during our proof: for any $t\in(0,\delta]$, there exists $t^{\ast}\in(0,t]$ such that $\|\nabla u(t^{\ast},\cdot)\|_{L^\infty(A_r)} \gtrsim \frac{1}{t}$ for $r \sim \exp(-\exp(\frac{C}{t}))$, where $A_r$ stands for the annulus between $|x|=\frac{r}{2}$ and $|x|=2r$. See \eqref{blowup} and the ensuing estimates. For the $H^2$ illposedness, we obtain the blow-up rate with respect to the distance from the origin as $\|\omega(t,\cdot)\|_{H^1(\bbT^2 \setminus B(0;r))}^2 \gtrsim (\log \frac{1}{r})^{\varepsilon}$ for some $\varepsilon>0$. We refer to \eqref{H2_dist} for details.

The rest of the paper is organized as follows. In Section 2, we introduce the notion of flow map and verify our key lemma. Section 3 is devoted to the proof of a robust existence theorem for a class of initial data candidates at the stream-functional level. The last section provides the proof of our main theorem.



\section{Preliminaries}

\subsection{Flow map}
When $\omega_0$ belongs to $L^\infty(\bbT^2)$, it is known that the velocity $u$ is log-Lipschitz and so the flow map $\Phi$ can be induced via the ordinary differential equation
\begin{equation}\label{flow}
    \partial_t \Phi (t,x) = u (t,\Phi(t,x))
\end{equation}
with the initial condition $\Phi(0,x)=x$ for any $x\in\bbT^2$. See \cite{MP}. For brevity, $\bbT^2=[-1,1)^2$.

\subsection{Key lemma}
 In this section, we state and prove a modified version of Lemma 2.1 in \cite{Zlatos}, which originates from Lemma 3.1 of \cite{KS}. Let $Q(x):=[x_1,1]\times [0,1]$ and $R(x):=[\frac{x_1}{4},x_1]\times [x_2,x_1]$ for any $x\in[0,1)^2$. The following is our key lemma we use frequently throughout the paper.
\begin{lemma}\label{lem_key}
Let $\omega(t,\cdot)\in L^\infty(\bbT^2)$ be odd in both $x_1$ and $x_2$. 
For any $x\in\bbT^2$ with $0<x_2<x_1<1/2$, there holds
\begin{equation}\label{key}
    u_j(t,x)=(-1)^{j}\left(\frac{4}{\pi} \int_{Q(2x)}\frac{y_1 y_2}{|y|^4}\omega(t,y)\,\mathrm{d}y+E_j(t,x)\right) x_j \quad\mbox{for}\quad j\in\{1,2\},
\end{equation}
where $E_j$'s satisfy the bounds
\begin{equation}\label{error_1}
    |E_1(t,x)| \leq C \|\omega(t,\cdot)\|_{L^\infty(\bbT^2)}
\end{equation} and
\begin{equation}\label{error_2}
    |E_2(t,x)| \leq C \|\omega(t,\cdot)\|_{L^\infty(\bbT^2)} + C\|\omega(t,\cdot)\|_{L^\infty(R(2x))} \log\left(1+\frac{x_1}{x_2}\right)
\end{equation}
for some universal constant $C>0$.
\end{lemma}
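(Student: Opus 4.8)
The proof is a computation with the Biot–Savart law on $\bbT^2$, exploiting the double odd symmetry of $\omega$ to fold the convolution kernel into the first quadrant $[0,1)^2$ and to extract a leading factor of $x_j$. The plan is as follows. First I would write $u = \nabla^\perp \psi$ with $-\Delta \psi = \omega$, so that $u(t,x) = \int_{\bbT^2} K(x,y)\,\omega(t,y)\,\ud y$ where $K$ is the (periodized) Biot–Savart kernel, $K(x,y) \approx \frac{1}{2\pi}\frac{(x-y)^\perp}{|x-y|^2}$ plus a smooth correction coming from periodization. Using that $\omega$ is odd in both variables, I would symmetrize the integral over the four reflections $(\pm y_1, \pm y_2)$ of a point $y$ in the first quadrant; the four kernel contributions combine, and a direct algebraic simplification (the same one as in Lemma 3.1 of \cite{KS} and Lemma 2.1 of \cite{Zlatos}) shows that the singular part of the folded kernel for the $j$-th velocity component is, to leading order, $(-1)^j \frac{4}{\pi}\frac{y_1 y_2}{|y|^4} x_j$ when $0 < x_2 < x_1$ and $y$ ranges over the region where $|y| \gg |x|$. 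The smooth periodization correction contributes only an $O(\|\omega\|_{L^\infty})$ term to each $E_j$ because its kernel is bounded.

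**Splitting the domain.** The next step is to split the first-quadrant integral into the ``far'' region $Q(2x) = [2x_1,1]\times[0,1]$, where the expansion $\frac{(x-y)^\perp}{|x-y|^2} = \frac{(-y)^\perp}{|y|^2} + O(\frac{|x|}{|y|^2})$ is valid and yields the main term in \eqref{key} plus an error controlled by $\|\omega\|_{L^\infty}$ via $\int_{|y|\geq 2|x|} \frac{|x|}{|y|^3}\,\ud y \lesssim 1$; and the ``near'' region $[0,1)^2 \setminus Q(2x)$, where one cannot Taylor-expand. In the near region I would further decompose: the part with $y_1 \le \frac{x_1}{4}$ or $y_2 \ge x_1$ (i.e. away from the diagonal strip and away from $x$) is handled by a crude bound showing its contribution to $u_j/x_j$ is $O(\|\omega\|_{L^\infty})$; and the remaining part, which is essentially $R(2x) = [\frac{x_1}{2}, 2x_1]\times[2x_2, 2x_1]$ together with the genuinely singular neighborhood of $x$. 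For the $j=1$ component, the kernel piece $\frac{-(x_2 - y_2)}{|x-y|^2}$ integrated against $\omega$ over this region, after dividing by $x_1$, stays $O(\|\omega\|_{L^\infty})$ because the logarithmic singularity of $\int \frac{\ud y}{|x-y|}$ is cut by the factor $x_1$ in front; this gives \eqref{error_1}. For the $j=2$ component, the kernel piece $\frac{x_1 - y_1}{|x-y|^2}$ divided by $x_2$ produces the genuinely dangerous term: integrating $\frac{1}{|x-y|}$-type singularities over the anisotropic box where $y_2$ ranges over a set of size $\sim x_1$ while $y_1 \sim x_1$ gives a factor $\log(1 + x_1/x_2)$, and this is exactly why $E_2$ carries the extra $\|\omega\|_{L^\infty(R(2x))} \log(1 + x_1/x_2)$ term in \eqref{error_2}. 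One localizes the $L^\infty$ norm to $R(2x)$ by observing that the log-divergent contribution comes only from $y$ with $|y - x| \lesssim x_1$ and $y$ in the first quadrant, which is contained in $R(2x)$ after adjusting constants.

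**Main obstacle.** The routine parts are the symmetrization identity and the far-field Taylor expansion. The delicate step — and the one I expect to require the most care — is the bookkeeping in the near region for $E_2$: one must show that outside of $R(2x)$ the singular kernel contributes only $O(\|\omega\|_{L^\infty})$ (not another log), so that the $\log(1+x_1/x_2)$ factor genuinely multiplies only the \emph{local} sup-norm $\|\omega\|_{L^\infty(R(2x))}$ and not the global one. This matters for the application, since in the blow-up construction one wants the large-log factor to see only the region where $\omega$ has been stirred to be large. Concretely one checks that for $y_1 \notin [\frac{x_1}{4}, 4x_1]$ the denominator $|x-y|$ is comparable to $\max(x_1, |y|)$ and the $y_2$-integral contributes no log; and for $y_1 \in [\frac{x_1}{4}, 4x_1]$ but $y_2 \ge x_1$, one again loses the log because $|x-y| \gtrsim y_2$. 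Pinning down these comparisons with explicit constants, and verifying $R(x) = [\frac{x_1}{4}, x_1]\times[x_2, x_1]$ (hence $R(2x) = [\frac{x_1}{2}, 2x_1]\times[2x_2, 2x_1]$) is the right box so that $\|\omega\|_{L^\infty(R(2x))}$ dominates the local contribution, is the crux. Everything else is a matter of collecting the $O(\|\omega\|_{L^\infty})$ pieces into $E_1$ and $E_2$.
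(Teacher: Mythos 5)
Your overall plan matches the paper's: write $u$ via the periodized Biot--Savart law, fold the kernel into $[0,1)^2$ using the double odd symmetry (which extracts a factor of $x_j$ in the $j$-th component and, for $n\ne 0$, leaves only bounded errors), then decompose $[0,1)^2$ into $Q(2x)$ (producing the main term) and the complement, with $R(2x)$ isolated as the only subregion for $j=2$ that can contribute a $\log(1+x_1/x_2)$. The paper additionally offloads the $j=1$ case and part of the $Q(2x)$ analysis onto Lemma 2.1 of \cite{Zlatos}, whereas you propose to Taylor-expand the kernel directly on $Q(2x)$; both routes are fine.

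However, the mechanism you describe for the $j=2$ log is wrong and would fail if carried out literally. You write that ``the kernel piece $\frac{x_1-y_1}{|x-y|^2}$ divided by $x_2$'' integrated over $R(2x)$ gives $\log(1+x_1/x_2)$. It does not: setting $z=y-x$ with $z_1\lesssim x_1$, $z_2\in[x_2,x_1]$,
\[
\int_0^{x_1}\!\!\int_{x_2}^{x_1}\frac{z_1}{x_2(z_1^2+z_2^2)}\,\mathrm{d}z_2\,\mathrm{d}z_1\ \sim\ \frac{x_1}{x_2},
\]
a linear (not logarithmic) blow-up. The resolution is precisely the point your sketch glosses over: the factor $x_2$ is not obtained by dividing --- it is extracted from the $n=0$ term of the symmetrized sum, and what remains is the \emph{symmetrized} kernel $\frac{y_2(y_1-x_1)}{|x-y|^2\,|x-\bar y|^2}$ (and its $+$ counterpart), carrying an extra factor $\frac{y_2}{|x-\bar y|^2}$. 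On $R(2x)$ one has $y_2\sim z_2$ and $|x-\bar y|\sim|z|$, so the effective kernel is $\frac{z_1z_2}{|z|^4}$, whose integral over the anisotropic band $z_1\lesssim x_1$, $z_2\in[x_2,x_1]$ is $\sim\log(x_1/x_2)$. This same reflection factor is also what keeps the region near $x$ and the southern box $[0,2x_1]\times[0,2x_2]$ from contributing a log --- a claim you assert but do not justify, and which is not obvious from the unsymmetrized kernel. (Your parallel remark for $j=1$, that the log is ``cut by the factor $x_1$ in front,'' has the same issue: the correct reason there is that the relevant reflection factor $y_1/|x-\tilde y|^2\sim 1/x_1$ is small on the near region.) In short: keep the symmetrized kernel in play throughout the near-region estimates; dropping the $|x-\bar y|^{-2}$ (resp.\ $|x-\tilde y|^{-2}$) factor is what sends your heuristic off course.
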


\begin{remark}
This excludes the logarithmic singularity in $E_1$, and specifies the region, $R(2x)$, where the singular behavior of $E_2$ can happen.
\end{remark}

\begin{proof}
 We treat $u_1$ first. Since $0<x_2<x_1,$ we can compute
\begin{equation}\label{est_Q}
\int_{2x_1}^{1}\int_{0}^{2x_2} \frac{y_1 y_2}{|y|^4}\,\mathrm{d}y \leq C \int_{x_1}^{1}\int_{0}^{x_2}\frac{z_1z_2}{|z|^4}\,\mathrm{d}z_2\,\mathrm{d}z_1 \leq C \int_{0}^{x_2}\frac{z_2}{x_1^2+z_2^2}\,\mathrm{d}z_2 \leq C \log\left(1+\frac{x_2}{x_1}\right) \leq C.
 \end{equation}
Combining the above observation with Lemma 2.1 in \cite{Zlatos} twice, we obtain \eqref{key} for $j=1$ and the error bound \eqref{error_1}, respectively.

For $u_2$, the Biot-Savart law says that $u_2(t,x)$ equals
\begin{equation*}
 \frac{2}{\pi}\sum_{n\in\bbZ^2}\int_{[0,1]^2}\left(\frac{(-x_1+y_1+2n_1)y_2(x_2-2n_2)}{|x-y-2n|^2|x-\bar{y}-2n|^2}-\frac{(-x_1-y_1+2n_1)y_2(x_2-2n_2)}{|x+y-2n|^2|x-\tilde{y}-2n|^2}\right)\omega(t,y)\,\mathrm{d}y,
\end{equation*}
due to the odd symmetry of $\omega(t,\cdot).$ The size of the above quantity with $n=(0,0)$ removed is bounded by $Cx_2\|\omega(t,\cdot)\|_{L^\infty(\bbT^2)}$, similarly to the proof of Lemma 2.1 in \cite{Zlatos}. Therefore, it suffices to consider the term corresponding to $n=(0,0),$ which is $x_2$ times
 \begin{equation*}
     \frac{2}{\pi} \int_{[0,1]^2}\left(\frac{y_2(y_1-x_1)}{|x-y|^2|x-\bar{y}|^2}-\frac{y_2(x_1+y_1)}{|x+y|^2|x-\tilde{y}|^2}\right)\,\omega(t,y)\,\mathrm{d}y.
 \end{equation*}
We may separate the integral into several regions; while the absolute values of the integrals over the other regions can be bounded by $C\|\omega(t,\cdot)\|_{L^\infty(\bbT^2)}$, the integral over $R(2x)=[\frac{x_1}{2},2x_1]\times [2x_2,2x_1]$ possibly bears the logarithmic singularity.

We treat the absolute value of the first integral only, since the second one can be dealt with similarly. On $R(2x),$ it is bounded by $\|\omega(t,\cdot)\|_{L^\infty(R(2x))}$ times
\begin{equation*}
C \int_{R(2x)} \frac{y_2(y_1-x_1)}{|x-y|^2|x-\bar{y}|^2}\,\mathrm{d}y \leq C \int_{0}^{x_1}\int_{x_2}^{1} \frac{z_2z_1}{|z|^4}\,\mathrm{d}z_2 \,\mathrm{d}z_1 \leq C \log\left(1+\frac{x_1}{x_2}\right).
\end{equation*} On the region $[0,\frac{x_1}{2}]\times[2x_2,2x_1]$, the absolute value of the integral is bounded by $\|\omega(t,\cdot)\|_{L^\infty(\bbT^2)}$ times
\begin{equation*}
\begin{split}  C \int_{x_2}^{1}\int_{\frac{x_1}{2}}^{x_1}\frac{z_1z_2}{(z_1^2+z_2^2)^2} \leq C \int_{\frac{x_1}{2}}^{x_1}\frac{z_1}{x_2^2+z_1^2}\leq C \log\left(\frac{x_1^2+x_2^2}{\frac{x_1^2}{4}+x_2^2}\right)\leq C.
\end{split}
\end{equation*}
In $[0,2x_1]\times[0,2x_2]$, the absolute value of the integral is bounded by $\|\omega(t,\cdot)\|_{L^\infty(\bbT^2)}$ times
\begin{equation*}
    C\int_{0}^{x_1}\int_{0}^{x_2}\frac{x_2z_1}{(z_1^2+z_2^2)(z_1^2+x_2^2)}\,\mathrm{d}z_2\,\mathrm{d}z_1 \leq C\int_{0}^{x_1}\frac{x_2}{z_1^2+x_2^2}\,\mathrm{d}z_1 \leq C.
\end{equation*}
For the integral over the remainder region $[0,2x_1]\times[2x_1,1]$, we first observe that $y_1\leq y_2$ for any $[0,2x_1]\times[2x_1,1].$ Writing $y_2=2x_1+\varepsilon$ with $\varepsilon>0$, we further observe that $y_2-x_2 \geq y_2-x_1= x_1+\varepsilon \geq x_1+\frac{\varepsilon}{2}=\frac{y_2}{2}$.
This leads to
\begin{equation*}
    \frac{1}{2}y_2 \leq |x-y|\leq 2y_2,\quad \frac{1}{2}y_2\leq |x-\bar{y}|\leq 2y_2.
\end{equation*}
Therefore, the 
absolute value of the corresponding integral is bounded by $\|\omega(t,\cdot)\|_{L^\infty(\bbT^2)}$ times
\begin{equation*}
C \int_{2x_1}^{1}\int_{0}^{2x_1} \frac{x_1}{y_2^3}\,\mathrm{d}y_1\,\mathrm{d}y_2 \leq C.
\end{equation*}
On the key region $Q(2x),$ we separate the region into $[2x_1,1]\times[2x_2,1]$ and $[2x_1,1]\times[0,2x_2].$ On the first region, we recall the proof of Lemma 2.1 in \cite{Zlatos}, which says
that the integral equals to
\begin{equation*}
    \int_{[2x_1,1]\times[2x_2,1]} \frac{y_1y_2}{|y|^4}\omega(t,y)\,\mathrm{d}y_1\,\mathrm{d}y_2 + B_2(t,x),
\end{equation*}
where $B_2(t,x)$ is bounded by $C\|\omega(t,\cdot)\|_{L^\infty(\bbT^2)}.$ Then, the previous computation \eqref{est_Q} reduces the proof to showing that the absolute value of the integral over $[2x_1,1]\times[0,2x_2]$ is bounded by $C\|\omega(t,\cdot)\|_{L^\infty(\bbT^2)}$. This is true because it is bounded by $\|\omega(t,\cdot)\|_{L^\infty(\bbT^2)}$ times
\begin{equation*}
   C\int_{0}^{x_2}\int_{x_1}^{1} \frac{z_1x_2}{(z_1^2+z_2^2)(z_1^2+x_2^2)} \,\mathrm{d}z_2\,\mathrm{d}z_1 \leq C \int_{0}^{x_2} \frac{x_2}{z_1^2+x_2^2}\,\mathrm{d}z_1 \leq C.
\end{equation*}
The proof is finished.
\end{proof}

\section{Stream-functional data}\label{sec_initial}
The specific choice of our initial data would be given in the last section. Here, instead, we prove a robust existence theorem for the general stream-function $\psi$ that satisfies the vorticity formulation $\Delta \psi(r,\theta) = f(r)g(\theta)$ in the polar coordinates.
\begin{proposition}\label{prop_initial}
       Let $\bar{f}:[0,1]\to [0,\infty)$ be a smooth concave function with
\begin{equation}\label{f_ass}
     \bar{f}(0) = \bar{f}(1) = 0 \quad \mbox{and} \quad r\bar{f}'(r) \to 0 \quad \mbox{as} \quad r \to 0.
\end{equation} 
Let $g:\bbR\to\bbR$ be a smooth $\frac{\pi}{2}$-periodic function whose mean value over the period is zero, i.e., \begin{equation}\label{g_ass}
     \int_{0}^{\frac{\pi}{2}}g(\theta)\,\mathrm{d}\theta=0.
 \end{equation} Then for any given continuous function $f : [0,1] \to \bbR$ with $|f| \leq \bar{f}$, there exists a stream-function $\psi\in C^2(\bbT^2)$ such that
       \begin{equation}\label{id_0}
           \Delta \psi (r,\theta) = f(r)g(\theta).
       \end{equation}
\end{proposition}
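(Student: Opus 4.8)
The plan is to separate variables in $\theta$, solve an explicit family of equidimensional ODEs in $r$, sum, and periodize onto $\bbT^2$. Since $g$ is $\tfrac{\pi}{2}$-periodic with vanishing mean over a period, its Fourier series involves only the angular modes $e^{4ik\theta}$, $k\neq0$, so $g(\theta)=\sum_{k\ge1}(a_k\cos(4k\theta)+b_k\sin(4k\theta))$ with $\sum_{k}(|a_k|+|b_k|)<\infty$ (rapidly, by smoothness of $g$). Writing $n_k=4k$ and seeking $\psi=\sum_{k\ge1}\phi_{n_k}(r)(a_k\cos(n_k\theta)+b_k\sin(n_k\theta))$, the relation $\Delta(\phi(r)\cos(n\theta))=(\phi''+\tfrac1r\phi'-\tfrac{n^2}{r^2}\phi)\cos(n\theta)$ reduces \eqref{id_0} to solving $\phi_n''+\tfrac1r\phi_n'-\tfrac{n^2}{r^2}\phi_n=f$ on $(0,1)$. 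The homogeneous solutions being $r^{\pm n}$, variation of parameters with the limits of integration chosen so as to suppress the $r^{-n}$-singular part at $0$ gives
\[
\phi_n(r)=-\frac{1}{2n}\left(r^{-n}\int_0^r s^{n+1}f(s)\,ds+r^{n}\int_r^1 s^{1-n}f(s)\,ds\right),
\]
where $f$ is extended by $0$ for $r>1$ (harmless since $f(1)=0$, as $|f|\le\bar f$ and $\bar f(1)=0$); for $r\ge1$ this is a multiple of $r^{-n}$, which is harmonic and, again using $f(1)=0$, glues to $\phi_n$ across $r=1$ to second order.

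The main obstacle is $C^2$-regularity at the origin, where the mere continuity of $f$ rules out any Schauder-type shortcut and one must exploit the structure. The decisive inputs are $f(0)=0$ (because $\bar f(0)=0$), that $\bar f$ is bounded with $\bar f(r)\to0$ as $r\to0$, and that every angular frequency is $\ge4$. Differentiating the kernel and estimating the two integrals — via $\big|\int_0^r s^{n+1}f\big|\le\frac{r^{n+2}}{n+2}\sup_{[0,r]}|f|$ and, splitting $\int_r^1 s^{1-n}\bar f$ at $s=\sqrt r$, via $\int_r^{\sqrt r}s^{1-n}\bar f\le\frac{r^{2-n}}{n-2}\sup_{[0,\sqrt r]}\bar f$ and $\int_{\sqrt r}^1 s^{1-n}\bar f\le\frac{\|\bar f\|_\infty}{n-2}r^{1-n/2}$ — I would show $\phi_n(r)/r^2\to0$, $\phi_n'(r)/r\to0$, and $\phi_n''(r)\to0$ as $r\to0^+$. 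Writing $\cos(n\theta)=p_n(x)/r^n$ with $p_n$ a degree-$n$ homogeneous harmonic polynomial, this is a degree-$0$ function with $|\nabla^j\cos(n\theta)|\lesssim n^jr^{-j}$; hence the product $\phi_n(r)\cos(n\theta)$ (likewise with $\sin$), smooth on the punctured disk, has gradient and Hessian converging to $0$ at the origin. Invoking the elementary fact that a function which is $C^2$ off a point and whose gradient and Hessian extend continuously across it is $C^2$ there, each angular piece extends to a $C^2$ function on the disk with vanishing first and second derivatives at $0$.

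For the summation, the kernel yields $|\phi_n(r)|\lesssim\|\bar f\|_\infty\,r^2/n^2$, $|\phi_n'(r)|\lesssim\|\bar f\|_\infty\,r/n$, $|\phi_n''(r)|\lesssim\|\bar f\|_\infty$ on $(0,1]$, and on $\{r\ge1\}$ the $r^{-n}$-tail contributes $\lesssim\|\bar f\|_\infty$ to the $C^2$ norm as well; combining with $|\nabla^j\cos(n\theta)|\lesssim n^jr^{-j}$, the $n^2$ from $\partial_\theta^2$ is exactly compensated by the $n^{-2}$ in $|\phi_n|$, so $\|\phi_n(r)\cos(n\theta)\|_{C^2(\bbR^2)}\lesssim\|\bar f\|_\infty$ uniformly in $n$. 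Hence $\sum_k(|a_k|+|b_k|)\|\phi_{n_k}(r)\cos(n_k\theta)\|_{C^2}<\infty$, and $\psi_0:=\sum_{k\ge1}\phi_{n_k}(r)(a_k\cos(n_k\theta)+b_k\sin(n_k\theta))$ converges in $C^2(\bbR^2)$ with $\Delta\psi_0=f(r)g(\theta)$ on $\{r\le1\}$ and $\Delta\psi_0=0$ on $\{r>1\}$ (as $\Delta$ is continuous $C^2\to C^0$). Because the lowest angular mode is $n=4$, one also gets $\psi_0(x)=O(|x|^{-4})$ and $\nabla^2\psi_0(x)=O(|x|^{-6})$ as $|x|\to\infty$.

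Finally I periodize: $\psi:=\sum_{\kappa\in(2\bbZ)^2}\psi_0(\,\cdot+\kappa)$. The decay bounds together with $\sum_\kappa(1+|\kappa|)^{-6}<\infty$ give convergence in $C^2_{\mathrm{loc}}(\bbR^2)$, so $\psi\in C^2$, and it is $(2\bbZ)^2$-periodic by construction, hence descends to $C^2(\bbT^2)$. The translates $\overline{B(0,1)}+\kappa$ of the support of $f(r)g(\theta)$ meet only at points where that function vanishes, so on the fundamental domain $\Delta\psi=\sum_\kappa(fg)(\,\cdot+\kappa)=f(r)g(\theta)$, which is \eqref{id_0}. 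As indicated, the only genuinely delicate step is the origin regularity in the second paragraph; everything else is bookkeeping with the explicit kernel.
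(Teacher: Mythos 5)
Your proof is correct and follows essentially the same strategy as the paper: expand $g$ in its Fourier modes $\cos(4k\theta),\sin(4k\theta)$ with $k\ge1$, separate variables, solve the resulting equidimensional ODE, and sum. In fact the two radial kernels are literally the same object: one integration by parts takes the paper's nested double integral $f_n(r)=-r^{-4n}\int_0^r s^{8n-1}\int_s^1\tau^{1-4n}f(\tau)\,\ud\tau\,\ud s$ to your variation-of-parameters form $-\frac{1}{8n}\bigl(r^{-4n}\int_0^r s^{4n+1}f\,\ud s+r^{4n}\int_r^1 s^{1-4n}f\,\ud s\bigr)$. The genuine difference lies in how the kernel is estimated near the origin. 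The paper integrates $\int_r^1\tau^{1-4n}\bar f\,\ud\tau$ by parts, which brings in $\bar f'$, and then invokes concavity (to bound $\bar f'(\tau)$ by $\bar f'(r)$) and the hypothesis $r\bar f'(r)\to 0$ to close. You instead split the integral at $s=\sqrt r$, use $\int_r^{\sqrt r}s^{1-n}\bar f\lesssim\frac{r^{2-n}}{n}\sup_{[0,\sqrt r]}\bar f$ for the near piece and the tail bound $\lesssim\frac{\|\bar f\|_\infty}{n}r^{1-n/2}$ for the far piece, relying only on $\bar f$ bounded, continuous, and $\bar f(0)=0$ together with the mode gap $n\ge 4$. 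This is more elementary and proves the Proposition under strictly weaker hypotheses on $\bar f$; the concavity and $r\bar f'(r)\to0$ assumptions become unnecessary. What your route gives up is the slightly finer quantitative statement recorded in the paper's Remark, namely that $|\partial_i\partial_j\psi(r,\cdot)|\lesssim\sup_{[0,r]}\bar f+\sup_{[0,r]}\tau\bar f'(\tau)$ with the supremum at scale $r$ (your split yields scale $\sqrt r$); for the logarithmic $\bar f$ actually chosen in Section~4 the two scales differ only by a constant, so nothing is lost in practice. Finally, you make the periodization onto $\bbT^2$ explicit by summing translates of the compactly-generated disk solution, which is a careful point that the paper's polar-coordinate presentation leaves implicit.
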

\begin{remark}
    There hold
    \begin{equation*}
    \begin{split}
    |\partial_{i}\partial_{j}\psi (r,\tht)| &\lesssim \sup_{\tau \in [0,r]} \bar{f}(\tau) + \sup_{\tau\in[0,r]} \tau \bar{f}'(\tau), \\
    |\partial_{i}\psi (r,\tht)| &\lesssim r \big(\sup_{\tau \in [0,r]} \bar{f}(\tau) + \sup_{\tau\in[0,r]} \tau \bar{f}'(\tau)\big), \\
    |\psi(r,\tht)| &\lesssim r^2 \big(\sup_{\tau \in [0,r]} \bar{f}(\tau) + \sup_{\tau\in[0,r]} \tau \bar{f}'(\tau)\big).
    \end{split}
    \end{equation*}  This says that $\omega_0=\Delta \psi$ and $\nb u_0=\nb \nb^{\perp} \psi$ can share the same decay rate near the origin.
\end{remark}
 \begin{proof} 
 For each $n\in\bbN,$ we set up the building block $\psi_n:(r,\theta)\mapsto \psi_n(r,\theta)$ as
 \begin{equation}\label{block}
 \begin{cases}
     \psi_{n}(r,\theta)=f_n(r)g_n(\theta),\\
     f_n(r)=-\frac{1}{r^{4n}}\int_{0}^{r}s^{8n-1}\int_{s}^{1}\tau^{1-4n}f(\tau)\,\mathrm{d}\tau\,\mathrm{d}s, \\ g_n(\theta)=a_n\cos{4n\theta}+b_n\sin{4n\theta},
 \end{cases}     
 \end{equation}
 where $a_n$ and $b_n$ are the $n$-th Fourier series coefficients of $g$ satisfying $a_n=\frac{4}{\pi}\int_{0}^{\pi/2}g(\theta)\cos{4n\theta}\,\mathrm{d}\theta$ and $b_n=\frac{4}{\pi}\int_{0}^{\pi/2}g(\theta)\sin{4n\theta}\,\mathrm{d}\theta$, respectively. The motivation of the above setting \eqref{block} is the set of the two simple observations:
\begin{equation}\label{id_1}
    \begin{split}
\Delta\Big(f_n(r)g_n(\theta)\Big) &= \left(f_n''(r)+\frac{1}{r}f_n'(r)-\frac{16n^2}{r^2}f_n(r)\right)g_n(\theta),
\\
    \frac{(r^{1-8n}(r^{4n}f_n(r))')'}{r^{1-4n}} &= f_n''(r)+\frac{1}{r}f_n'(r)-\frac{16n^2}{r^2}f_n(r).
    \end{split}
\end{equation}
Due to the choice of $f_n$ in \eqref{block}, we obtain that
\begin{equation}\label{id_2}
    \Delta\Big(f_n(r)g_n(\theta)\Big) = f(r)g_n(\theta).
\end{equation}
Since the inversion formula for the $\frac{\pi}{2}$-periodic mean-zero $g$ holds as $$g(\theta)=\sum_{n=1}^{\infty}(a_n\cos{4n\theta}+b_n\sin{4n\theta})=\sum_{n=1}^{\infty} g_n(\theta),$$
the observation \eqref{id_2} leads to the desired identity for $\psi:=\sum_{n=1}^{\infty}\psi_n$ as
\begin{equation*}
\Delta \psi(r,\theta)=\Delta\left(\sum_{n=1}^{\infty}\psi_n\right) (r,\theta) = f(r)\sum_{n=1}^{\infty}g_n(\theta) = f(r)g(\theta),
\end{equation*}
as long as we can justify the interchange of the Laplacian and the summation. Therefore, it suffices to show that 
$\psi(r,\theta)=\sum_{n=1}^{\infty}\psi_n (r,\theta)$ is well-defined and such $\psi$ belongs to $C^2(\bbT^2)$. This would imply that the identity \eqref{id_0} holds as claimed.

It remains to prove that such $\psi$ is well-defined and belongs to $C^2(\bbT^2)$. Define the three different $N$-sequences, say $S_N$, $S_{i,N}$, and $S_{ij,N}$, by
\begin{equation*}
\begin{split}
    S_N &= \sum_{n=1}^{N} \psi_{n}(r,\theta), \\
    S_{i,N} &= \sum_{n=1}^{N} \partial_i \psi_{n}(r,\theta), \\
    S_{ij,N} &= \sum_{n=1}^{N} \partial_i \partial_j \psi_{n} (r,\theta),
    \end{split}
\end{equation*}
where $\partial_i$ denotes the differentiation in the $i$-th variable. Note that the above sequences are well-defined for each $N\in\bbN$ because $f_n$ is twice continuously differentiable.

We will prove that the three sequences of summations are Cauchy sequences uniformly-in-$(r,\theta)$ converging to the corresponding limits. As a consequence, recalling Theorem 7.17 in \cite{Rudin}, we can justify the interchange between the limiting process and taking derivatives, guaranteeing the existence of the second derivatives of $\psi$. Then the uniform limit theorem would conclude that those derivatives are continuous, finishing the proof.

We start by simply observing that
\begin{equation*}
    \partial_i r = \frac{x_i}{r} \quad \mbox{and} \quad \partial_i \theta = \frac{(-1)^{i}x_{3-i}}{r^2}, \quad \forall i\in\{1,2\}.
\end{equation*}
Then we can estimate the radial terms as
\begin{align*}
    |f_n(r)|&\lesssim \frac{r^2}{n^2} \big(\sup_{\tau \in [0,r]} \bar{f}(\tau) + \sup_{\tau\in[0,r]} \tau \bar{f}'(\tau)\big), \\ \left|\partial_i \left(f_n(r)\right)\right| &\lesssim \frac{r}{n} \big(\sup_{\tau \in [0,r]} \bar{f}(\tau) + \sup_{\tau\in[0,r]} \tau \bar{f}'(\tau)\big),\\
    \left|\partial_i \partial_j \left(f_n(r)\right)\right| &\lesssim \sup_{\tau \in [0,r]} \bar{f}(\tau) + \sup_{\tau\in[0,r]} \tau \bar{f}'(\tau).
\end{align*}
We only show the third inequality because the others can be treated similarly. A direct computation gives \begin{equation}\label{ineq_second}
    \left| \partial_i \partial_j \left( f_n(r) \right) \right| \lesssim \left| \frac{n^2}{r^{4n+2}} \int_{0}^{r}s^{8n-1}\int_{s}^{1}\tau^{1-4n}f(\tau)\,\mathrm{d}\tau\mathrm{d}s \right| + \left| nr^{4n-2} \int_{r}^{1}\tau^{1-4n}f(\tau) \,\mathrm{d}\tau \right| + |f(r)|.
\end{equation} 
Using $\bar{f} \geq |f|$ and performing integration by parts with $\bar{f}(1)=0$, we see that
\begin{align*}
    \left| nr^{4n-2} \int_{r}^{1}\tau^{1-4n}f(\tau) \,\mathrm{d}\tau \right| &\leq nr^{4n-2} \int_{r}^{1}\tau^{1-4n}\bar{f}(\tau) \,\mathrm{d}\tau \\
    &= \frac{n}{4n-2} r^{4n-2} \left( r^{2-4n}\bar{f}(r) - \bar{f}(1)\right) + \frac{n}{4n-2} r^{4n-2} \int_{r}^{1} \tau^{2-4n} \bar{f}'(\tau) \,\ud \tau \\
    &\lesssim \bar{f}(r) + r^{4n-2} \int_{r}^{1} \tau^{2-4n} \bar{f}'(\tau) \,\ud \tau.
\end{align*} We note by $\bar{f}'(r) \geq \bar{f}'(\tau)$, $\tau \in [r,1]$ that
$$r^{4n-2} \int_{r}^{1} \tau^{2-4n} \bar{f}'(\tau) \,\ud \tau \leq r^{4n-2} \bar{f}'(r) \int_{r}^{1} \tau^{2-4n} \,\ud \tau = \frac{1-r^{4n-3}}{4n-3} r\bar{f}'(r),$$
which yields
$$ \left| nr^{4n-2} \int_{r}^{1}\tau^{1-4n}f(\tau) \,\mathrm{d}\tau \right| \lesssim \bar{f}(r) + \frac{1}{n} \sup_{\tau \in [0,r]} \tau \bar{f}'(\tau).$$
The above estimate further implies 
\begin{align*}
    \left| \frac{n^2}{r^{4n+2}}\int_{0}^{r}s^{8n-1}\int_{s}^{1}\tau^{1-4n}f(\tau)\,\mathrm{d}\tau\mathrm{d}s \right| &\lesssim \frac{n}{r^{4n+2}}\int_0^r s^{4n+1} \bar{f}(s) \,\ud s + \frac{1}{r^{4n+2}} \sup_{\tau \in [0,r]} \tau \bar{f}'(\tau) \int_0^r s^{4n+1}\,\ud s \\
    &\lesssim \sup_{\tau \in [0,r]} \bar{f}(\tau) + \frac{1}{n} \sup_{\tau \in [0,r]} \tau \bar{f}'(\tau).
\end{align*}
In light of \eqref{ineq_second}, we establish that
$$\left| \partial_i \partial_j \left( f_n(r) \right) \right| \lesssim \sup_{\tau \in [0,r]} \bar{f}(\tau) + \frac{1}{n} \sup_{\tau \in [0,r]} \tau \bar{f}'(\tau).$$
The angular terms can be bounded in a more direct fashion. We get
\begin{equation*}
   \left|\partial_i (\sin{4n\theta})\right|+  \left|\partial_i (\cos{4n\theta})\right| \lesssim \frac{n}{r}, \quad \mbox{and} \quad \left|\partial_i \partial_j (\sin{4n\theta)}\right|+ \left|\partial_i \partial_j (\cos{4n\theta)}\right|\lesssim \frac{n^2}{r^2}.
\end{equation*}
From the identity \eqref{block} combined with the above bounds, we deduce that
\begin{equation*}
    |\partial_i \partial_j \psi_n(r)| \lesssim \left(|a_n|+|b_n|\right) \left( \sup_{\tau \in [0,r]} \bar{f}(\tau) + \sup_{\tau \in [0,r]} \tau \bar{f}'(\tau) \right).
\end{equation*}
Due to the smoothness of $g$ in the angular variable $\theta$, the order of the decay of $a_n$ can be made arbitrarily large. In other words, $a_n \lesssim \frac{1}{n^\ell}$ for any $\ell\in\bbN$. This implies that
\begin{equation*}
    |S_{ij,N}-S_{ij,M}| = \left|\sum_{n=M+1}^{N} \partial_i \partial_j \psi_n \right| \lesssim \left( \sup_{\tau \in [0,r]} \bar{f}(\tau) + \sup_{\tau \in [0,r]} \tau \bar{f}'(\tau) \right) \sum_{n=M+1}^{N}\frac{1}{n^{\ell}} \lesssim \frac{1}{M^{\ell-1}}.
\end{equation*}
Therefore, the sequence $S_{ij,N}$ is uniformly-in-$(r,\theta)$ Cauchy with respect to $N\in\bbN$. We denote by $\psi_{ij}$ such point-wise limit of the sequence $\{S_{ij,N}\}_{N\in\bbN}$. In similar ways, we can prove that $S_{i,N}$ and $S_{i}$ uniformly converge to the limits, $\psi$ and $\psi_i$, respectively. Here $\psi$ agrees with the previous definition $\psi=\sum_{n=1}^{\infty}\psi_n$. This concludes that $\psi$, $\partial_i \psi$, and $\partial_{i}\partial_{j}\psi$ exist as the uniform limits of $S_N$, $S_{i,N}$, and $S_{ij,N}$, respectively. Since $S_N$, $S_{i,N}$, and $S_{ij,N}$ are twice continuously differentiable, continuously differentiable, and continuous, respectively for each $N$, the uniform limits are also twice continuously differentiable, continuously differentiable, and continuous, respectively. This is in light of the uniform limit theorem and Theorem 7.17 in \cite{Rudin}. This finishes the proof.
\end{proof}

\section{Dual blow-up in $C^1\cap H^2$}\label{sec_proof}
 To prove Theorem~\ref{thm1}, the core ingredients are the followings. 
 \begin{itemize}
     \item A specific choice of initial data
     \item Short-time hyperbolic dynamics
     \item A positivity lemma
 \end{itemize}
 We introduce and construct those pieces in the following subsections, and then put them together into a whole proof of Theorem~\ref{thm1} at the very end of this section.

\subsection{Choice of initial data}
We fix a smooth positive bump function $\phi:\bbR\to [0,\infty)$ satisfying
\begin{equation*}
    \phi(r)=\begin{cases}
        1 \quad \mbox{if}\,\,\, r\in\left(-\frac{1}{32},\frac{1}{32}\right),\\
        0 \quad \mbox{if}\,\,\, r \notin\left(-\frac{1}{16},\frac{1}{16}\right).
    \end{cases}
\end{equation*}
Setting up the tetradically-scaled bumps as $\phi^{(n)}(r) := \phi(4^n(r - 4^{-n}))$, we define the radial part $f$ by

$$f(r) = \sum_{n \geq N_0}n^{-\beta} \phi^{(n)}(r)$$
for some $N_0 \in \bbN$ and $\beta \in (\frac{1}{2},1]$. The numbers $N_0$ and $\beta$ will be specified later. 

For the angular part, we fix another smooth bump function $\varphi:[0,2\pi]\to[0,\infty]$ satisfying
\begin{equation*}
    \varphi(\theta)=\begin{cases}
        1 \quad \mbox{if}\,\,\, \theta\in\left(\frac{\pi}{3}+\frac{1}{3}\tht_0,\frac{\pi}{3}+\frac{2}{3} \tht_0 \right),\\
        0 \quad \mbox{if}\,\,\, \theta \notin\left(\frac{\pi}{3},\frac{\pi}{3}+\theta_0\right),
    \end{cases}
\end{equation*}
where $\tht_0>0$ will be taken later to be sufficiently small. Then we take our angular part as $$g(\tht) = \varphi(\tht) - \varphi(\frac{\pi}{2}-\tht).$$
Notice that $g$ can be naturally extended to the full angular interval $[0,2\pi]$ in a way that it remains  $\frac{\pi}{2}$-periodic and its mean-value over the period is always zero. Then the balanced part of our initial data is established as
$$m(r,\theta)=f(r)g(\theta), $$
in the polar coordinates.

As a final touch, we define a perturbation that would cause the breakdown of the symmetry of $m$ once the time evolution starts from the initial data. Fix the last bump function $h:[0,1]^2\to [0,\infty)$ such that

\begin{equation*}
    h(x)=\begin{cases}
        1 \quad \mbox{if}\,\,\, x \in \left[4^{-(N_0-3)},\frac{2}{3}\right]^2 \\
        0 \quad \mbox{if}\,\,\, x \notin\left(4^{-(N_0-2)},\frac{5}{6}\right)^2.
    \end{cases}
\end{equation*}
Then $h$ enjoys the non-degeneracy property $\int_{[0,1]^2} \frac{y_1y_2}{|y|^4} h(y) \,\ud y \geq \frac{1}{C}\| h \|_{L^{\infty}} N_0$ for some $C>0$. 

Now we gather up those pieces of initial data as
\begin{equation*}
    \omega_0 = m(r,\tht) + h(x).
\end{equation*}
Here, we briefly show that $u_0 = \nabla^{\perp} (-\Delta)^{-1} \omega_0 \in C^1(\bbT^2)$ for $\beta \in (0,1]$. We refer to \cite{EJ,JY} for one to prove $u_0 \in H^2(\bbT^2)$ with $\beta \in (\frac{1}{2},1]$. We recall the stream-function $\psi = \Delta^{-1} \omega_0 = \Delta^{-1}(m+h)$. Since $h \in C^{\infty}(\bbT^2)$ and $\int_{\bbT^2} h(x) \,\ud x = 0$, it holds $\nabla (-\Delta)^{-1} h \in H^s(\bbT^2)$ for any $s \in \bbR$. On the other hand, we can verify that $f$ and $g$ satisfy the assumptions for Proposition~\ref{prop_initial} by taking $\bar{f} = C (\log \frac{1}{r})^{-\beta}$ with some large $C>0$. Thus, we obtain  $(-\Delta)^{-1} m \in C^2(\bbT^2)$. 

Using that $\| u(t) \|_{L^{\infty}} \leq C \| \omega_0 \|_{L^{\infty}}$, we can take $\delta>0$ small enough to make $\supp h$ hardly move on the time interval $[0,\delta]$. Therefore, $\supp h \subset (4^{-(N_0-1)},1)^2$ and 
\begin{equation*}
    \frac{1}{C}\| h \|_{L^{\infty}} N_0 \leq \int_{[0,1]^2 \cap \Phi(t,\supp h)} \frac{y_1y_2}{|y|^4} \omega(t,y) \,\ud y \leq C\| h \|_{L^{\infty}} N_0
\end{equation*}
hold for some $C>0$ and any sufficiently large $N_0$.

Since $g(\theta)=\varphi(\theta)-\varphi(\frac{\pi}{2}-\theta)$ takes both positive values and negative values, the support of $\omega_0$ can be decomposed into the dichotomous annular strips that are mutually disjoint and simply connected, say
\begin{equation*}
    \supp(m(r,\theta))=\bigcup_{n\geq N_0}\supp (\phi_n(r) g(\theta)).
\end{equation*}
We denote by $D^{(n)}$ the $n$-th dichotomous annular strip, $\supp (\phi_n(r) g(\theta))$. We further decompose each $D^{(n)}$ as $D^{(n)} = D_{+}^{(n)} \cup D_{-}^{(n)}$, where $D_{+}^{(n)}$ and $D_{-}^{(n)}$ stand for the positive part (meaning that $\omega_0(x) \geq 0$ when $x\in D_{+}^{(n)}$) and the negative part, respectively, of $\supp (\phi_n(r) g(\theta)) \subset [0,1]^2$.

\subsection{Short time dynamics}
We claim that each flow map starting from the $n$-th annular strip runs through a \emph{hyperbolic trajectory} on the time interval $[0,\delta]$ within the margin of some harmless error. One may compare this with Claim I in \cite{JK}.

\begin{lemma}[Short time dynamics]\label{lem_SD}
    Let $x \in D^{(n)}$ for $n \geq N_0$. Then for any $x' \in \cup_{\ell > n} D^{(\ell)}$,
    \begin{equation}\label{order_est}
        2\Phi_1(t,x') \leq \Phi_1(t,x) 
    \end{equation} holds on the time interval $[0,\delta]$. Moreover, there exist an increasing function $\gamma^{(n)} : [0,\delta] \to [0,\infty)$ and a vector valued function $\eta^{x} : [0,\delta] \to \bbR^2$ satisfying the initial conditions $\gamma^{(n)}(0) = \eta^{x}_1(0) = \eta^{x}_2(0) = 0$
such that the hyperbolic scenario
\begin{equation}\label{hyp_trj}
        \Phi(t,x) = (e^{\gamma^{(n)}(t) +\eta_1^{x}(t)} x_1, e^{-\gamma^{(n)}(t) +\eta_2^{x}(t)} x_2)
    \end{equation}
    holds on the time interval $[0,\delta]$, equipped with the bounds $\frac{\ud}{\ud t} \gamma^{(n)}(t) \geq \frac{N_0}{C}$ and $|\frac{\ud}{\ud t} \eta^{x}(t)| \leq C$.
\end{lemma}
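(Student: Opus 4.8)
\emph{Approach.} I would run a continuity (bootstrap) argument on $[0,\delta]$, carried out in logarithmic coordinates along the flow, with Lemma~\ref{lem_key} as the engine. Since $\omega_0$ is odd in $x_1$ and in $x_2$ and transport preserves this symmetry, $\omega(t,\cdot)$ is odd in both variables for all $t$, so \eqref{key} is available. For $x\in D^{(n)}$ set $a^x(t):=\log\Phi_1(t,x)$ and $b^x(t):=\log\Phi_2(t,x)$; this is legitimate as long as $\Phi(t,x)$ stays in the open positive quadrant and in the admissible region $0<\Phi_2<\Phi_1<\tfrac12$ (or its mirror, for the sub-strip $D^{(n)}_\pm$ lying above the diagonal, where one uses the version of Lemma~\ref{lem_key} obtained from the $x_1\leftrightarrow x_2$ symmetry of the Biot--Savart law — the two cases are symmetric), which will be part of the bootstrap hypothesis. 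Abbreviating $B(t,z):=\tfrac4\pi\int_{Q(2z)}\tfrac{y_1y_2}{|y|^4}\omega(t,y)\,dy$, formula \eqref{key} gives, along the flow,
\[
\frac{d}{dt}\log\Phi_j(t,x)=(-1)^{j}\bigl(B(t,\Phi(t,x))+E_j(t,\Phi(t,x))\bigr),\qquad j\in\{1,2\}.
\]
Everything in the lemma reduces to: (i) a two-sided bound $B(t,\Phi(t,x))\simeq N_0$ with a fixed sign; (ii) $|E_j(t,\Phi(t,x))|\lesssim1$; and (iii) the ensuing comparisons for the exponents.

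\emph{The estimate on $B$.} Write $\omega(t,\cdot)=(m+h)\circ\Phi_t^{-1}$. Since $\|u(t)\|_{L^\infty}\le C\|\omega_0\|_{L^\infty}$, on a short interval $[0,\delta]$ the comparatively macroscopic set $\Phi_t(\supp h)$ moves only slightly: it stays inside $(4^{-(N_0-1)},1)^2$, stays disjoint from $\Phi_t(\supp m)$, and is contained in $Q(2\Phi_1(t,x))\times[0,1]$ for every $x\in D^{(n)}$. Using incompressibility ($\det D\Phi_t\equiv1$),
\[
B(t,\Phi(t,x))=\frac4\pi\int_{\supp h}\frac{\Phi_1\Phi_2}{|\Phi|^4}(t,z)\,h(z)\,dz+\frac4\pi\sum_{\ell\ge N_0}\int_{D^{(\ell)}\cap\,\Phi_t^{-1}(Q(2\Phi))}\frac{\Phi_1\Phi_2}{|\Phi|^4}(t,z)\,m(z)\,dz .
\]
Because $\Phi_t(\supp h)$ barely moves, $\tfrac{\Phi_1\Phi_2}{|\Phi|^4}\simeq\tfrac{z_1z_2}{|z|^4}$ on $\supp h$, so the first term is comparable to $\int\tfrac{z_1z_2}{|z|^4}h\,dz$, which by the non-degeneracy of $h$ lies in $[\tfrac1CN_0,CN_0]$ and has a definite sign. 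In the sum, the much-finer strips ($\ell\gg n$) have their images excluded from $Q(2\Phi)$, the much-coarser strips ($\ell\ll n$) are entirely inside, and the transition strip $\ell\approx n$ contributes only $O(N_0^{-\beta})$. For a strip that is entirely inside, the decisive fact is that it is \emph{balanced}: $\int_{D^{(\ell)}}\tfrac{z_1z_2}{|z|^4}m(z)\,dz=0$, because $\tfrac{z_1z_2}{|z|^4}=\tfrac{\sin2\psi}{2\rho^2}$, the radial and angular integrals separate, and $g$ is $\tfrac\pi2$-periodic with mean zero. Hence that strip's contribution equals $\int_{D^{(\ell)}}\bigl(\tfrac{\Phi_1\Phi_2}{|\Phi|^4}-\tfrac{z_1z_2}{|z|^4}\bigr)m$, which on linearizing the hyperbolic factor in the stretching exponent $\gamma^{(\ell)}(t)$ is $O\bigl(\theta_0\,\gamma^{(\ell)}(t)\,\ell^{-\beta}\bigr)$, and — by the deliberate angular placement of the bumps near $\tfrac\pi3$ — of the sign that reinforces the $h$-term rather than opposing it. Summing over $\ell$ therefore keeps $B(t,\Phi(t,x))$ comparable to $N_0$ with the correct sign on all of $[0,\delta]$.

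\emph{Errors, exponents, and closing.} The bound $|E_1|\le C\|\omega(t,\cdot)\|_{L^\infty}=C\|\omega_0\|_{L^\infty}$ is immediate. For $E_2$ one checks that the singular region $R(2\Phi(t,x))$ sits at scale $\Phi_1(t,x)\sim4^{-n}\ll4^{-N_0}$, hence carries no mass from $h$, so $\|\omega(t,\cdot)\|_{L^\infty(R(2\Phi))}\le\|m\|_{L^\infty}\lesssim N_0^{-\beta}$, while $\log(1+\Phi_1/\Phi_2)$ stays bounded because the hyperbolic dynamics moves $\Phi_1/\Phi_2$ monotonically and $\delta$ is taken small enough that the diagonal is never crossed; thus $|E_2|\lesssim1$. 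Now fix a reference point $x^{(n)}_\ast\in D^{(n)}$ and let $\gamma^{(n)}(t)$ be the strip-level (that is, $x$-independent) part of $\log(\Phi_1(t,x)/x_1)$, so that $\tfrac{d}{dt}\gamma^{(n)}$ equals, up to bounded error, $\mp B(t,\Phi(t,x^{(n)}_\ast))$; by the estimate on $B$ this has magnitude $\gtrsim N_0$ and a fixed sign, giving monotonicity and $|\tfrac{d}{dt}\gamma^{(n)}|\ge N_0/C$. Defining $\eta^x(t)$ via \eqref{hyp_trj}, we get $\tfrac{d}{dt}\eta^x_j=(-1)^j\bigl[(B(t,\Phi(t,x))-B(t,\Phi(t,x^{(n)}_\ast)))+(E_j(t,\Phi(t,x))-E_j(t,\Phi(t,x^{(n)}_\ast)))\bigr]$, which is $O(1)$ since the $h$-contribution to $B$ is actually independent of $x\in D^{(n)}$, the $m$-contribution varies by $O(N_0^{-\beta})$, and each $E_j$ is $O(1)$; hence $|\tfrac{d}{dt}\eta^x|\le C$. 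For \eqref{order_est}, subtract the $a$-equations for $x\in D^{(n)}$ and $x'\in\bigcup_{\ell>n}D^{(\ell)}$: the difference of growth rates is a difference of $B$'s whose separating annuli are balanced (hence $O(1)$) plus two $O(1)$ error differences, so $|a^x(t)-a^{x'}(t)-\log(x_1/x_1')|\le C\delta$; since the strips are tetradically separated, $\log(x_1/x_1')\ge2\log2$, so for $\delta$ small $a^x-a^{x'}\ge\log2$, i.e. $\Phi_1(t,x)\ge2\Phi_1(t,x')$. Finally one runs the continuity argument: on the maximal subinterval of $[0,\delta]$ on which the ansatz \eqref{hyp_trj} holds with, say, $|\eta^x|\le 2C$ and $\Phi(t,x)$ in the admissible region of the relevant form of Lemma~\ref{lem_key}, the estimates above improve all these bounds to strict inequalities, so the subinterval is all of $[0,\delta]$.

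\emph{Main obstacle.} The crux is the middle step: one must control the contribution to $B$ of the balanced part $m$ \emph{after it has been stirred and sheared by the a priori unknown flow}, and in particular show it cannot cancel or outweigh the $N_0$-sized contribution of $h$. This rests on the exact balance $\int_{D^{(\ell)}}\tfrac{z_1z_2}{|z|^4}m=0$ at $t=0$, on a quantitative bound for the deformation of each strip (linearization in $\gamma^{(\ell)}$, exploiting the thinness $\theta_0$ and the smallness of $\delta$), and — most delicately — on the correct sign of the leftover imbalance, which is precisely why the bumps are centred near $\tfrac\pi3$. The remaining ingredients (Lemma~\ref{lem_key} itself, incompressibility, the crude $L^\infty$ velocity bound governing the far field $h$, and the Gronwall comparisons) are comparatively routine.
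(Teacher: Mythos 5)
Your strategy mirrors the paper's — the key lemma along the flow, logarithmic coordinates, a bootstrap, and a decomposition of the key integral into the $h$-contribution and the stirred $m$-contributions — but there is a genuine gap at the step you yourself flag as the crux. You sketch a linearization of $\tfrac{\Phi_1\Phi_2}{|\Phi|^4}-\tfrac{z_1z_2}{|z|^4}$ in $\gamma^{(\ell)}(t)$ and assert, without proof, that the placement near $\pi/3$ makes the leftover imbalance favorable; but you never verify the sign of the linearized coefficient, bound the quadratic remainder, or justify that $\gamma^{(\ell)}(t)$ is small enough for linearization to apply uniformly in $\ell$. The paper closes this step non-perturbatively and with a different structure: it runs an induction on $n$, starting from $n=N_0$ where $Q(2\Phi(t,x))\cap\Phi(t,\supp\omega_0)$ contains only $\Phi(t,\supp h)$ so the $m$-contribution is simply absent, and the inductive step shows $\int_{\Phi(t,D^{(\ell)})}\tfrac{y_1y_2}{|y|^4}\omega\,\mathrm{d}y>0$ for each already-resolved $\ell<n$ by directly comparing $y\in D_{+}^{(\ell)}$ with the reflection $y'=(y_2,y_1)\in D_{-}^{(\ell)}$ — using $y_2>y_1$ (the placement above the diagonal $\theta=\pi/4$) together with the hyperbolic ansatz already established on $D^{(\ell)}$ to conclude $|\Phi(t,y)|\le|\Phi(t,y')|$, whence the kernel weights $\Phi(t,D_{+}^{(\ell)})$ at least as heavily as $\Phi(t,D_{-}^{(\ell)})$. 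That comparison argument, which also underlies Lemma~\ref{lem_C1_inflation}, is precisely the content missing from your proposal, and running a single bootstrap over all $n$ simultaneously denies you the inductive leverage the paper uses to make it work.

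A secondary error: the two-sided bound $B(t,\Phi(t,x))\simeq N_0$ you state as goal (i) is false for large $n$. The sum $\sum_{\ell=N_0}^{n-1}\int_{\Phi(t,D^{(\ell)})}\tfrac{y_1y_2}{|y|^4}\omega\,\mathrm{d}y$ grows like $n^{1-\beta}$ (or $\log n$ when $\beta=1$), and the proof of Theorem~\ref{thm1} relies essentially on this divergence. The lemma asserts, and the argument only needs, the one-sided lower bound $\tfrac{\mathrm{d}}{\mathrm{d}t}\gamma^{(n)}\ge N_0/C$, which positivity of each strip's stirred contribution supplies immediately; insisting on the upper bound would in fact contradict the blow-up mechanism.
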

\begin{remark}
    The main flow is governed by $\gamma^{(n)}(t)$ while one can view $\eta^{x}(t)$  as the small error.
\end{remark}

\begin{remark}\label{rmk_log_error}
    This observation means that the velocity field for the particles starting from $\cup_{n \geq N_0} D^{(n)}$ does not have the log-error term in \eqref{error_2} of our key lemma on the time interval $[0,\delta]$.
\end{remark}
\begin{proof}
    We begin with $x \in D^{(n)}$, $n = N_0$. We first show \eqref{order_est} and \eqref{hyp_trj} on the time interval $[0,T_n] \subset [0,\delta]$ where $\Phi(t,D^{(n)}) \subset \{ \frac{\pi}{36} \leq \tht \leq \frac{17\pi}{36} \}$ holds, and extend it to $[0,\delta]$ using the continuity argument. By the key lemma we have on the interval $[0,T_n]$ $$\frac{\ud}{\ud t}\gamma^{(n)}(t) = \frac{4}{\pi} \int_{[0,1]^2 \cap \Phi(t,\supp h)} \frac{y_1y_2}{|y|^4} \omega(t,y) \,\ud y$$ and
    \begin{equation}\label{no_log_est}
        \frac{\ud}{\ud t} (\log \Phi_1(t,x) - \gamma^{(n)}(t)) = \frac{\ud}{\ud t} \eta_1^{x}(t), \qquad \frac{\ud}{\ud t} (\log \Phi_2(t,x) + \gamma^{(n)}(t)) = \frac{\ud}{\ud t} \eta_2^{x}(t),
    \end{equation}
    where $\frac{\ud}{\ud t} \gamma^{(n)}(t) \geq \frac{N_0}{C}$ and $|\frac{\ud}{\ud t} \eta^{x}(t)| \leq C$. Integrating them over time, we obtain for $j=1,2$ that $\Phi_j(t,x) = e^{\gamma^{(n)}(t)+\eta_j^{x}(t)} x_j$ with $\gamma^{(n)} \geq \frac{N_0}{C}t$ and $|\eta^{x}(t)| \leq Ct$. Furthermore, for any $x,x' \in D^{(n)}$, $\frac{\Phi_1(t,x)}{\Phi_1(t,x')} = e^{\eta_1^{x}(t) - \eta_1^{x'}(t)} \frac{x_1}{x'_1}$ and $\frac{\Phi_2(t,x)}{\Phi_2(t,x')} = e^{\eta_2^{x}(t) - \eta_2^{x'}(t)} \frac{x_2}{x'_2}$ hold. Since there exist some constants $C_1>1$ and $C_2>1$ not depending on $n$ such that $2 x_1 \geq C_1 x'_1$ and $2 x_2 \geq C_2 x'_2$, we have $2\Phi_1(t,x) \geq C_1 e^{-C\delta} \Phi_1(t,x')$ and $2\Phi_2(t,x) \geq C_2 e^{-C\delta} \Phi_2(t,x')$. Thus, if $\delta$ satisfies $(\sqrt{C_1} + \sqrt{C_2})e^{-C\delta} \geq 1$, then it follows that
    \begin{equation}\label{C12_est}
        2\Phi_1(t,x) \geq \sqrt{C_1} \Phi_1(t,x') \qquad \mbox{and} \qquad 2\Phi_2(t,x) \geq \sqrt{C_2} \Phi_2(t,x').
    \end{equation}

    Next, we show \eqref{order_est} on $[0,T_n]$. Let $\Psi_1(t) = \sup_{x' \in \cup_{\ell > n} D^{(\ell)}} \Phi_1(t,x')$. We note that there exists a constant $C_3 > 1$ not depending on $n$ such that $2C_3\Psi_1(t=0) \leq x_1$ whenever we take $\tht_0$ small enough. From the key lemma, we can see
    \begin{equation*}
        \frac{\ud}{\ud t} \left( \log \Psi_1(t,x) - \log \Phi_1(t) \right) \leq \frac{4}{\pi} \int_{[2\Psi_1(t), 1] \times [0,1] \setminus Q(2\Phi(t,x))} \frac{y_1y_2}{|y|^4} \omega(t,y) \,\ud y + C \| \omega_0 \|_{L^{\infty}}.
    \end{equation*}
    By the definition of $\Psi_1$ and \eqref{hyp_trj}, there holds
    \begin{equation}\label{A_est}
    \begin{aligned}
        \int_{[2\Psi_1(t), 1] \times [0,1] \setminus Q(2\Phi(t,x))} \frac{y_1y_2}{|y|^4} \omega(t,y) \,\ud y &\leq \int_{D_{+}^{(n)}} \frac{\Phi_1(t,y)\Phi_2(t,y)}{|\Phi(t,y)|^4} \omega_0(y) \,\ud y \leq e^{C\delta} A n^{-\beta},
    \end{aligned}
    \end{equation}
    where 
    \begin{equation}\label{def_A}
        A := \sup_{\tau \in (-\infty,\infty)} \int_{D_{+}^{(n)}} \frac{y_1y_2}{|(e^{\tau}y_1,e^{-\tau}y_2)|^4} \phi^{(n)}(y) \,\ud y.
    \end{equation} Here, $A$ does not depend on $n$, and so
    \begin{equation*}
        \frac{\ud}{\ud t} \left( \log \Psi_1(t) - \log \Phi_1(t,x) \right) \leq C.
    \end{equation*}
    Integrating it over time, we obtain $e^{-C\delta} 2C_3 \Psi_1(t) \leq e^{-C\delta} \frac{x_1}{\Psi(t=0)}\Psi_1(t) \leq \Phi_1(t,x)$. Therefore, for sufficiently small $\delta>0$ with $e^{-C\delta} C_3 \geq \sqrt{C_3}$, we obtain 
    \begin{equation}\label{C3_est}
        2\sqrt{C_3} \Psi_1(t) \leq \Phi_1(t,x),
    \end{equation} which implies \eqref{order_est} on $[0,T_n]$. 
    
    Next, we show that $T_n$ can be replaced by $\delta$. By the previous estimates, we can find $T \in (T_n,\delta]$ such that \eqref{C12_est} and \eqref{C3_est} hold, and thus, we can verify that $R(2\Phi(t,x)) \cap \Phi(t,\supp \omega_0) = \emptyset$ and $Q(2\Phi(t,x)) \cap \Phi(t,\supp \omega_0) = \supp h$ on $[0,T]$. This gives \eqref{no_log_est} with the same upper bounds of $\gamma^{(n)}(t)$ and $\eta^{x}(t)$. Then \eqref{hyp_trj} and \eqref{C12_est} follow on the interval $[0,T]$. By repeating the above procedure, we have \eqref{C3_est} for all $t \in [0,T]$. Therefore, we can take $T=\delta$.  

    We consider $n = N_0+1$. By \eqref{C3_est}, there holds \begin{equation}\label{gamma_est}
    \frac{\ud}{\ud t}\gamma^{(n)}(t) = \frac{4}{\pi} \int_{[0,1]^2 \cap \Phi(t,\supp h)} \frac{y_1y_2}{|y|^4} \omega(t,y) \,\ud y + \frac{4}{\pi} \sum_{\ell = N_0}^{n-1} \int_{\Phi(t,D^{(\ell)})} \frac{y_1y_2}{|y|^4} \omega(t,y) \,\ud y.
    \end{equation} If the positivity condition $$\int_{\Phi(t,D^{(\ell)})} \frac{y_1y_2}{|y|^4} \omega(t,y) \,\ud y > 0$$ holds on $[0,\delta]$, then repeating the previous estimates for $n=N_0$ delivers the same results. Thus, it suffices to show that $$\int_{D^{(\ell)}} \frac{\Phi_1(t,y) \Phi_2(t,y)}{|\Phi(t,y)|^4} \omega_0(y) \,\ud y > 0.$$ Using \eqref{hyp_trj}, we have 
    \begin{gather*}
        \int_{D^{(\ell)}} \frac{\Phi_1(t,y) \Phi_2(t,y)}{|\Phi(t,y)|^4} \omega_0(y) \,\ud y =  \int_{D_{+}^{(\ell)}} \frac{e^{\eta_1^{y}(t) + \eta_2^{y}(t)} y_1y_2}{\left|(e^{\gamma^{(\ell)}(t) +\eta_1^{y}(t)} y_1, e^{-\gamma^{(\ell)}(t) +\eta_2^{y}(t)} y_2)\right|^4} \omega_0(y) \,\ud y \\
        + \int_{D_{-}^{(\ell)}} \frac{e^{\eta_1^{y}(t) + \eta_2^{y}(t)} y_1y_2}{\left|(e^{\gamma^{(\ell)}(t) +\eta_1^{y}(t)} y_1, e^{-\gamma^{(\ell)}(t) +\eta_2^{y}(t)} y_2)\right|^4} \omega_0(y) \,\ud y.
    \end{gather*} We note for $y \in D_{+}^{(\ell)}$ that
    \begin{equation*}\label{y_est}
    \begin{gathered}
        \frac{\ud}{\ud t}\left|(e^{\gamma^{(\ell)}(t) +\eta_1^{y}(t)} y_1, e^{-\gamma^{(\ell)}(t) +\eta_2^{y}(t)} y_2)\right|^2 = 2\partial_t (\gamma^{(\ell)}(t) + \eta_1^{y}(t))e^{2(\gamma^{(\ell)}(t) + \eta_1^{y}(t))} y_1^2 \\
        + 2\partial_t (-\gamma^{(\ell)}(t) + \eta_2^{y}(t))e^{2(-\gamma^{(\ell)}(t) + \eta_2^{y}(t))} y_2^2.
    \end{gathered}
    \end{equation*}
    On the other hand, for $y' = (y_2,y_1)$, it follows that
    \begin{equation*}\label{y'_est}
    \begin{gathered}
        \frac{\ud}{\ud t}\left|(e^{\gamma^{(\ell)}(t) +\eta_1^{y'}(t)} y_2, e^{-\gamma^{(\ell)}(t) +\eta_2^{y'}(t)} y_1)\right|^2 = 2\partial_t (\gamma^{(\ell)}(t) + \eta_1^{y'}(t))e^{2(\gamma^{(\ell)}(t) + \eta_1^{y'}(t))} y_2^2 \\
        + 2\partial_t (-\gamma^{(\ell)}(t) + \eta_2^{y'}(t))e^{2(-\gamma^{(\ell)}(t) + \eta_2^{y'}(t))} y_1^2.
    \end{gathered}
    \end{equation*}
    Hence, $y_2 > y_1$ and $\frac{\ud}{\ud t} \gamma^{(\ell)} (t) >> |\frac{\ud}{\ud t} \eta^{y}(t)|$ imply
    \begin{equation*}
        \frac{\ud}{\ud t}\left|(e^{\gamma^{(\ell)}(t) +\eta_1^{y}(t)} y_1, e^{-\gamma^{(\ell)}(t) +\eta_2^{y}(t)} y_2)\right|^2 \leq \frac{\ud}{\ud t}\left|(e^{\gamma^{(\ell)}(t) +\eta_1^{y'}(t)} y_2, e^{-\gamma^{(\ell)}(t) +\eta_2^{y'}(t)} y_1)\right|^2
    \end{equation*} and
    \begin{equation*}
        \left|(e^{\gamma^{(\ell)}(t) +\eta_1^{y}(t)} y_1, e^{-\gamma^{(\ell)}(t) +\eta_2^{y}(t)} y_2)\right|^2 \leq \left|(e^{\gamma^{(\ell)}(t) +\eta_1^{y'}(t)} y_2, e^{-\gamma^{(\ell)}(t) +\eta_2^{y'}(t)} y_1)\right|^2
    \end{equation*} on the interval $[0,\delta]$. Combining the above estimates, we can deduce $$\frac{\ud}{\ud t} \frac{e^{\eta_1^{y}(t) + \eta_2^{y}(t)} y_1y_2}{\left|(e^{\gamma^{(\ell)}(t) +\eta_1^{y}(t)} y_1, e^{-\gamma^{(\ell)}(t) +\eta_2^{y}(t)} y_2)\right|^4} \geq \frac{\ud}{\ud t} \frac{e^{\eta_1^{y'}(t) + \eta_2^{y'}(t)} y_1y_2}{\left|(e^{\gamma^{(\ell)}(t) +\eta_1^{y'}(t)} y_2, e^{-\gamma^{(\ell)}(t) +\eta_2^{y'}(t)} y_1)\right|^4}.$$ With $|y|=|y'|$, we can deduce the claim. This completes the proof.
\end{proof}

\subsection{Positivity of key integrals}

The following lemma says that the key integral in Lemma~\ref{lem_key} is dominated by the positive parts of the vorticity right after the initial time.

\begin{lemma}[Positivity]\label{lem_C1_inflation}
    There exists a constant $C > 0$ not depending on $n$ such that 
    \begin{equation}\label{C1_inflation}
        \int_{D^{(n)}} \frac{\Phi_1(t,y)\Phi_2(t,y)}{\left|\Phi(t,y)\right|^4} \omega_0(y) \,\ud y \geq (1-e^{Ct - 2\gamma^{(n)}(t)}) \int_{D_{+}^{(n)}} \frac{\Phi_1(t,y)\Phi_2(t,y)}{\left|\Phi(t,y)\right|^4}  \omega_0(y) \,\ud y
    \end{equation}
    as long as $\frac{\Phi_2(t,y)^2}{\Phi_1(t,y)^2} \geq \frac{5}{3}$ holds for all $y \in D_{+}^{(n)}$.
\end{lemma}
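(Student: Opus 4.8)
The plan is to convert \eqref{C1_inflation} into a pointwise comparison between the contributions of the positive and negative halves of $D^{(n)}$, using the diagonal reflection symmetry built into $\omega_0$, and then to read off that comparison from the hyperbolic form \eqref{hyp_trj}.

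\textbf{Reduction to a pointwise bound.} Write $z^{\ast}:=(z_2,z_1)$. The orthogonal (hence measure-preserving) map $z\mapsto z^{\ast}$ carries $D_{+}^{(n)}$ onto $D_{-}^{(n)}$: indeed $z^{\ast}$ has polar angle $\frac{\pi}{2}-\theta$, and $g(\frac{\pi}{2}-\theta)=\varphi(\frac{\pi}{2}-\theta)-\varphi(\theta)=-g(\theta)$ on the relevant angular ranges, so $\omega_0(z^{\ast})=-\omega_0(z)$ there, while $|z^{\ast}|=|z|$. Since $\Phi(t,\cdot)$ preserves the open first quadrant, performing this change of variables on the $D_{-}^{(n)}$-part of $\int_{D^{(n)}}\frac{\Phi_1\Phi_2}{|\Phi|^4}\omega_0$ gives
\begin{equation*}
    \int_{D^{(n)}}\frac{\Phi_1\Phi_2}{|\Phi|^4}\,\omega_0\,\ud y
    =\int_{D_{+}^{(n)}}\!\left(\frac{\Phi_1(t,z)\Phi_2(t,z)}{|\Phi(t,z)|^4}-\frac{\Phi_1(t,z^{\ast})\Phi_2(t,z^{\ast})}{|\Phi(t,z^{\ast})|^4}\right)\omega_0(z)\,\ud z .
\end{equation*}
As $\omega_0\ge 0$ on $D_{+}^{(n)}$, \eqref{C1_inflation} will follow once I prove the pointwise estimate
\begin{equation}\label{sketch_ptwise}
    \frac{\Phi_1(t,z^{\ast})\Phi_2(t,z^{\ast})}{|\Phi(t,z^{\ast})|^4}\le e^{Ct-2\gamma^{(n)}(t)}\,\frac{\Phi_1(t,z)\Phi_2(t,z)}{|\Phi(t,z)|^4},\qquad z\in D_{+}^{(n)} .
\end{equation}

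\textbf{Inserting the hyperbolic trajectory.} Both $z$ and $z^{\ast}$ lie in $D^{(n)}$, so Lemma~\ref{lem_SD} applies to each with the \emph{same} $\gamma:=\gamma^{(n)}(t)\ge 0$: $\Phi(t,z)=(e^{\gamma+\eta_1^{z}}z_1,\,e^{-\gamma+\eta_2^{z}}z_2)$ and $\Phi(t,z^{\ast})=(e^{\gamma+\eta_1^{z^{\ast}}}z_2,\,e^{-\gamma+\eta_2^{z^{\ast}}}z_1)$, with all $|\eta^{\,\cdot}(t)|\le C_0 t$ for a constant $C_0$ independent of $n$. Both numerators $\Phi_1\Phi_2$ equal $z_1z_2$ up to a factor $e^{O(t)}$, while $|\Phi(t,z)|^2=e^{2\gamma+2\eta_1^{z}}z_1^2+e^{-2\gamma+2\eta_2^{z}}z_2^2$ and $|\Phi(t,z^{\ast})|^2=e^{2\gamma+2\eta_1^{z^{\ast}}}z_2^2+e^{-2\gamma+2\eta_2^{z^{\ast}}}z_1^2$. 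Absorbing every $\eta$ into $e^{O(t)}$ factors, writing $s:=e^{4\gamma}\ge 1$ and $\rho:=z_1^2/z_2^2$, and using $e^{-2\gamma}=s^{-1/2}$, the inequality \eqref{sketch_ptwise} (with any $C\ge 12C_0$) is implied by
\begin{equation*}
    H(s):=\sqrt{s}\left(\frac{\rho s+1}{s+\rho}\right)^{2}\le 1 .
\end{equation*}

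\textbf{Proving $H(s)\le 1$ on the admissible range.} On $D_{+}^{(n)}$ the angle satisfies $\theta>\frac{\pi}{3}$, so $\rho=\cot^{2}\theta<\frac13$. A direct computation shows $\frac{\ud}{\ud s}\log H(s)$ has the sign of $q(s):=\rho s^{2}+(5\rho^{2}-3)s+\rho$, a parabola opening upward whose two roots are positive with product $1$; call them $s_{-}<1<s_{+}$. Hence $H$ is decreasing on $[1,s_{+}]$ and $H(1)=1$. On the other hand, the hypothesis $\Phi_2(t,z)^2/\Phi_1(t,z)^2\ge\frac53$ becomes, after accounting for the $\eta$'s, $s\rho\le\frac35 e^{O(t)}$, i.e. $s\le\frac{3}{5\rho}e^{O(t)}$; and since $\rho<\frac35$ one computes $q\big(\frac{3}{5\rho}\big)=\frac{100\rho^{2}-36}{25\rho}<0$, so $\frac{3}{5\rho}$ lies strictly between $s_-$ and $s_+$. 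Thus for $\delta$ small (so the factor $e^{O(t)}$ is close to $1$) we still have $s<s_{+}$, i.e. $s\in[1,s_{+}]$, whence $H(s)\le H(1)=1$. This gives \eqref{sketch_ptwise}, and therefore \eqref{C1_inflation}.

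\textbf{Main obstacle.} The delicate point is exactly this last step. The bound $H\le 1$ is sharp at $s=1$ (equivalently $t=0$, where both sides of \eqref{C1_inflation} vanish), so no lossy estimate of $H$ at the far endpoint of the admissible $s$-range can be afforded; one genuinely needs the monotonicity of $H$ together with the fact that the hypothesis, combined with the angular placement $\theta\approx\frac{\pi}{3}$ of $D_{+}^{(n)}$ (making $\rho\approx\frac13$, so $\frac{3}{5\rho}$ stays comfortably below the minimizer $s_{+}$) and the smallness of $\delta$, confines $s$ to $[1,s_{+}]$. One must also verify that the order-$t$ deviations $\eta^{z},\eta^{z^{\ast}}$ perturb every exponential by only a harmless $e^{O(t)}$ factor and do not disturb this borderline inequality.
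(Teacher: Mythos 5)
Your proof is correct and follows the same overall strategy as the paper's (reflect $D_{-}^{(n)}$ onto $D_{+}^{(n)}$ via $z\mapsto z^{\ast}=(z_2,z_1)$, insert the hyperbolic form \eqref{hyp_trj}, and reduce to a pointwise comparison of the Biot--Savart weights), but the execution of that final pointwise comparison is genuinely different, and noticeably heavier than what the paper does. The paper absorbs the order-$t$ error terms $\eta$ directly into fractions of $\gamma^{(n)}$ (using $|\eta|\leq Ct\ll\gamma^{(n)}$), arriving at the intermediate denominator $|(e^{\frac32\gamma}\Phi_2,e^{-\frac52\gamma}\Phi_1)|^4$, and then observes the one-line algebraic equivalence
\[
\bigl|(e^{\frac32\gamma}\Phi_2,e^{-\frac52\gamma}\Phi_1)\bigr|^2\geq|\Phi|^2
\iff
\frac{\Phi_2^2}{\Phi_1^2}\geq\frac{1-e^{-5\gamma}}{e^{3\gamma}-1},
\]
followed by the elementary bound $\tfrac{1-e^{-5\gamma}}{e^{3\gamma}-1}\leq\tfrac53$ (equivalently $5e^{3\gamma}+3e^{-5\gamma}\geq8$). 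The hypothesis $\Phi_2^2/\Phi_1^2\geq\tfrac53$ then closes the argument with no calculus. You instead push through to the pure-hyperbolic quantity $H(s)=\sqrt{s}\bigl(\tfrac{\rho s+1}{s+\rho}\bigr)^2$ with $s=e^{4\gamma}$, $\rho=z_1^2/z_2^2$, compute $\tfrac{d}{ds}\log H$, locate the roots $s_{\pm}$ of the quadratic $q(s)=\rho s^2+(5\rho^2-3)s+\rho$, and then argue $s\in[1,s_+]$ from the hypothesis and the angular placement $\rho<\tfrac13$. This is correct but substantially more machinery for the same conclusion, and it also uses the angular constraint $\theta>\pi/3$, which the paper's argument does not need at this step. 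One small item you leave unverified: you need not merely $\tfrac{3}{5\rho}<s_+$ but $\tfrac{3}{5\rho}e^{O(t)}\leq s_+$ uniformly over the admissible $\rho$ and small $t$; this does hold (the ratio $5\rho s_+/3$ stays roughly in $[4,5]$ for $\rho\in(0,\tfrac13]$), but it deserves a line, since the inequality $H(s)\leq1$ is sharp at $s=1$ and you are pushing $s$ out toward $s_+$.
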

\begin{remark}
    By \eqref{hyp_trj}, we have $\frac{\Phi_2(t,y)^2}{\Phi_1(t,y)^2} \geq 3e^{-C\delta} e^{-4\gamma^{(n)}(t)}$ for $y \in D_{+}^{(n)}$. Thus, \eqref{C1_inflation} holds whenever $e^{-4\gamma^{(n)}(t)} \geq \frac{2}{3}$ is satisfied.
\end{remark}
\begin{proof}
    Let $n \geq N_0$. Let $y \in D_{+}^{(n)}$ and $y'=(y_2,y_1) \in D_{-}^{(n)}$. By \eqref{hyp_trj} we can write $$\Phi(t,y') = (2e^{\gamma^{(n)}(t) + \eta_1^{y'}(t) - \eta_2^{y}(t)} \Phi_2(t,y), e^{-2\gamma^{(n)}(t) + \eta_2^{y'}(t) - \eta_1^{y}(t)} \Phi_1(t,y)).$$ Then, using the bound of the $\eta$ function, we have
    \begin{gather*}
        \int_{D_{-}^{(n)}} \frac{\Phi_1(t,y')\Phi_2(t,y')}{\left|\Phi(t,y')\right|^4} \omega_0(y') \,\ud y' \\
        = - \int_{D_{+}^{(n)}} \frac{e^{\eta_1^{y'}(t) - \eta_1^{y}(t)+\eta_2^{y'}(t) - \eta_2^{y}(t)}\Phi_1(t,y)\Phi_2(t,y)}{\left|(e^{2\gamma^{(n)}(t) + \eta_1^{y'}(t) - \eta_2^{y}(t)} \Phi_2(t,y), e^{-2\gamma^{(n)}(t) + \eta_2^{y'}(t) - \eta_1^{y}(t)} \Phi_1(t,y))\right|^4} \omega_0(y) \,\ud y \\
        \geq - e^{Ct - 2\gamma^{(n)}(t)} \int_{D_{+}^{(n)}} \frac{\Phi_1(t,y)\Phi_2(t,y)}{\left|(e^{\frac{3}{2}\gamma^{(n)}(t)} \Phi_2(t,y), e^{-\frac{5}{2}\gamma^{(n)}(t)} \Phi_1(t,y))\right|^4} \omega_0(y) \,\ud y.
    \end{gather*}
    We note that $$\left|(e^{\frac{3}{2}\gamma^{(n)}(t)} \Phi_2(t,y), e^{-\frac{5}{2}\gamma^{(n)}(t)} \Phi_1(t,y))\right|^2 \geq \left| \Phi(t,y) \right|^2$$ if and only if $$\left( \frac{\Phi_2(t,y)}{\Phi_1(t,y)} \right)^2 \geq \frac{1-e^{-5\gamma^{(n)}(t)}}{e^{3\gamma^{(n)}(t)}-1}.$$ Using that the right-hand side is less than $\frac{5}{3}$, we obtain
    \begin{gather*}
        \int_{D_{-}^{(n)}} \frac{\Phi_1(t,y')\Phi_2(t,y')}{\left|\Phi(t,y')\right|^4} \omega_0(y') \,\ud y' \geq -e^{Ct - 2\gamma^{(n)}(t)} \int_{D_{+}^{(n)}} \frac{\Phi_1(t,y)\Phi_2(t,y)}{\left|\Phi(t,y)\right|^4} \omega_0(y') \,\ud y.
    \end{gather*}
    This completes the proof.
\end{proof}

\subsection{Proof of Theorem~\ref{thm1}}

\begin{proof}[Proof of Theorem~\ref{thm1}]
At first, we consider $\beta \in (\frac{1}{2},1]$ and prove that $u \not\in L^{\infty}([0,\delta];C^1(\bbT^2))$ for any $\delta>0$. Let $\tht_0$ be a small constant and $N_0$ be a large constant to satisfy all the above estimates. We fix $\delta$ to be small enough that $N_0\delta$ is sufficiently small, and let $T \in [0,\delta]$. For any $x \in D^{(n)}$ with $n \geq N_0$, Remark~\ref{rmk_log_error} gives us that $$\left| \frac{\ud}{\ud t} \log \Phi_1(t,x) - \frac{\ud}{\ud t} \gamma^{(n)}(t) \right| + \left| \frac{\ud}{\ud t} \log \Phi_2(t,x) + \frac{\ud}{\ud t} \gamma^{(n)}(t) \right| \leq C,$$ where $$\frac{\ud}{\ud t} \gamma^{(n)}(t) = \frac{4}{\pi} \int_{[0,1]^2 \cap \Phi(t,\supp h)} \frac{y_1y_2}{|y|^4} \omega(t,y) \,\ud y + \frac{4}{\pi} \sum_{\ell = N_0}^{n-1} \int_{\Phi(t,D^{(\ell)})} \frac{y_1y_2}{|y|^4} \omega(t,y) \,\ud y.$$ Since $\Phi(t,\supp h) \sim \supp h$ on $[0,\delta]$, it is clear that $$\frac{1}{2} \int_{[0,1]^2 \cap \supp h} \frac{y_1y_2}{|y|^4} \omega_0(y) \,\ud y \leq \int_{[0,1]^2 \cap \Phi(t,\supp h)} \frac{y_1y_2}{|y|^4} \omega(t,y) \,\ud y \leq 2 \int_{[0,1]^2 \cap \supp h} \frac{y_1y_2}{|y|^4} \omega_0(y) \,\ud y.$$
Recalling \eqref{def_A}, we can estimate for $\beta<1$ as $$\sum_{\ell = N_0}^{n-1} \int_{\Phi(t,D^{(\ell)})} \frac{y_1y_2}{|y|^4} \omega(t,y) \,\ud y \leq e^{C\delta} \sum_{\ell=N_0}^{n-1} A \ell^{-\beta} \leq e^{C\delta} \frac{A}{1-\beta} n^{1-\beta},$$ and for $\beta=1$ as $$\sum_{\ell = N_0}^{n-1} \int_{\Phi(t,D^{(\ell)})} \frac{y_1y_2}{|y|^4} \omega(t,y) \,\ud y \leq e^{C\delta} \sum_{\ell=N_0}^{n-1} A \ell^{-1} \leq e^{C\delta} A \log n.$$
The above estimates imply that there exists a constant $C>0$ not depending on $\beta$ and $\ell$ such that for $\xi^{(\ell)} = \frac{1-\beta}{C} \frac{1}{\ell^{1-\beta}}$ with $\beta <1 $ and $\xi^{(\ell)} = \frac{1}{C} \frac{1}{\log \ell}$ with $\beta = 1$, we have $\Phi(t,D^{(\ell)}) \sim \Phi(t',D^{(\ell)})$ whenever $|t-t'| \leq \xi^{(\ell)}$ on the interval $[0,\delta]$. On the other hand, \eqref{C1_inflation} gives that 
\begin{align*}
    \int_{\Phi(t, D^{(\ell)})} \frac{y_1y_2}{|y|^4} \omega(t,y) \,\ud y &\geq (1-e^{CT-\frac{N_0}{C}T}) \int_{\Phi(t, D_{+}^{(\ell)})} \frac{y_1y_2}{|y|^4} \omega(t,y) \,\ud y \\
    &\geq (1-e^{-\frac{N_0}{C} \frac{T}{2}}) \int_{\Phi(t, D_{+}^{(\ell)})} \frac{y_1y_2}{|y|^4} \omega(t,y) \,\ud y
\end{align*}
for $t \in [T^{(\ell)},T]$, where $T^{(\ell)} \in (0,\frac{T}{2}]$ with $\gamma^{(\ell)}(T^{(\ell)}) = \frac{N_0}{C} \frac{T}{2}$ so that $1-e^{CT^{(\ell)}-2\gamma^{(\ell)}(T^{(\ell)})} \geq 1- e^{CT-\frac{N_0}{C}T}$. We used the largeness of $N_0$ in the last inequality. Note that $\frac{\Phi_2(T^{(\ell)},y)^2}{\Phi_1(T^{(\ell)},y)^2} > \frac{5}{3}$ is satisfied for all $y \in D_{+}^{(\ell)}$ from the smallness of $N_0T$. 

Our claim is that there exist $y \in D_{+}^{(n)}$ and $n \geq N_0$ such that $$\frac{\Phi_2(T,y)^2}{\Phi_1(T,y)^2} \leq 2.$$ If the claim holds, then $$3 e^{-C\delta} e^{-4\gamma^{(n)}(T)} \leq  \frac{y_2^2}{y_1^2} e^{-C\delta} e^{-4\gamma^{(n)}(T)} \leq 2.$$
Thus, for sufficiently small $\delta$, we have a positive constant $\varepsilon = \frac{1}{4} (\log \frac{3}{2} - C\delta)$ such that $$\varepsilon \leq \gamma^{(n)}(T) - C\delta \leq \log \frac{\Phi_1(t,y)}{y_1} = \int_0^T \frac{u_1(t,\Phi(t,y))}{\Phi_1(t,y)} \,\ud t.$$ The mean-value theorem in time variable says that there exists $t \in (0,T]$ satisfying
\begin{equation}\label{blowup}
    \frac{\varepsilon}{T} \leq \frac{u_1(t,\Phi(t,y))}{\Phi_1(t,y)}.
\end{equation}
 Since $T \in (0,\delta]$ is arbitrary, we conclude that $$\sup_{t \in (0,\delta)} \| u(t,\cdot) \|_{C^1} = \infty.$$ This would finish the proof of the $C^1$ part of Theorem~\ref{thm1}.


It remains to verify the claim. To that end, we divide the proof into the two cases $T<T^{(\ell)} + \xi^{(\ell)}$ and $T^{(\ell)} + \xi^{(\ell)} \leq T$. Note that sufficiently large $\ell$ with $\xi^{(\ell)} \leq \frac{T}{2}$ falls onto the second case. Hence, we have the diverging point $\ell_0$ with $\ell_0 \sim (\frac{1-\beta}{T})^{\frac{1}{1-\beta}}$ for $\beta < 1$ and $\ell_0 \sim e^{\frac{1}{T}}$ for $\beta = 1$. In the first case, we only use a rough bound
\begin{equation*}
    \int_0^T \int_{\Phi(t,D^{(\ell)})} \frac{y_1y_2}{|y|^4} \omega(t,y) \,\ud y \ud t \geq 0.
\end{equation*} In the other case, we can see
\begin{align*}
    \int_{\Phi(t,D^{(\ell)})} \frac{y_1y_2}{|y|^4} \omega(t,y) \,\ud y &\geq (1-e^{-\frac{N_0}{C}\frac{T}{2}}) \int_{\Phi(t,D_{+}^{(\ell)})} \frac{y_1y_2}{|y|^4} \omega(t,y) \,\ud y \\
    &= (1-e^{-\frac{N_0}{C}\frac{T}{2}}) \int_{D_{+}^{(\ell)}} \frac{\Phi_1(t,y)\Phi_2(t,y)}{|\Phi(t,y)|^4} \omega_0(y) \,\ud y \\
    &\geq \frac{1}{2} (1-e^{-\frac{N_0}{C}\frac{T}{2}}) \int_{D_{+}^{(\ell)}} \frac{\Phi_1(T^{(\ell)},y)\Phi_2(T^{(\ell)},y)}{|\Phi(T^{(\ell)},y)|^4} \omega_0(y) \,\ud y
\end{align*}
on the interval $[T^{(\ell)}, T^{(\ell)} + \xi^{(\ell)}]$. Moreover, due to Lemma~\ref{lem_SD} and $\gamma^{(\ell)}(T^{(\ell)}) = \frac{N_0}{C}\frac{T}{2}$, there holds 
\begin{equation*}
    \begin{aligned}
    &\frac{1}{2} (1-e^{-\frac{N_0}{C}\frac{T}{2}}) \int_{D_{+}^{(\ell)}} \frac{\Phi_1(T^{(\ell)},y)\Phi_2(T^{(\ell)},y)}{|\Phi(T^{(\ell)},y)|^4} \omega_0(y) \,\ud y \\
    &\hphantom{\qquad\qquad\qquad} \geq \frac{1}{2} (1-e^{-\frac{N_0}{C}\frac{T}{2}}) e^{-C\delta} \ell^{-\beta} \int_{D_{+}^{(\ell)}} \frac{y_1y_2}{\left|(e^{\frac{N_0}{C}\frac{T}{2}}y_1,e^{-\frac{N_0}{C}\frac{T}{2}}y_2)\right|^4} \phi^{(\ell)}(y) \,\ud y \\
    &\hphantom{\qquad\qquad\qquad} \geq \frac{1}{2} (1-e^{-\frac{N_0}{C}\frac{T}{2}}) e^{-C\delta} \ell^{-\beta} \int_{D_{+}^{(\ell)}} \frac{y_1y_2}{|y|^4} \phi^{(\ell)}(y) \,\ud y \\
    &\hphantom{\qquad\qquad\qquad} \geq \frac{N_0T}{C} \ell^{-\beta}.
\end{aligned}
\end{equation*} We have used 
the Taylor expansion with the smallness of $N_0T$ in the last inequality. As a result,
\begin{equation*}
    \int_0^T \int_{\Phi(t,D^{(\ell)})} \frac{y_1y_2}{|y|^4} \omega(t,y) \,\ud y \,\ud t \geq \int_{T^{(\ell)}}^{T^{(\ell)} + \xi^{(\ell)}} \int_{\Phi(t,D^{(\ell)})} \frac{y_1y_2}{|y|^4} \omega(t,y) \,\ud y \,\ud t \geq \frac{N_0T}{C} \ell^{-\beta} \xi^{(\ell)}.
\end{equation*} Combining the above estimates, for $x \in D^{(n)}$ with sufficiently large $n$, we have that
\begin{equation*}
    \log \frac{\Phi_1(T,x)}{x_1} = \int_0^T \frac{\ud}{\ud t} \log \Phi_1(t,x) \,\ud t \geq \frac{N_0T}{C} \sum_{\ell = \ell_0}^{n-1} \ell^{-\beta} \xi^{(\ell)} - C\delta.
\end{equation*}
For $\beta < 1$, we see $$\sum_{\ell = \ell_0}^{n-1} \ell^{-\beta} \xi^{(\ell)} = \frac{1-\beta}{C} \sum_{\ell = \ell_0}^{n-1} \ell^{-1} \geq \frac{1-\beta}{C} \log \frac{n}{\ell_0} \geq \frac{1-\beta}{C} \log \frac{n}{C} \left( \frac{T}{1-\beta} \right)^{\frac{1}{1-\beta}}$$ and 
\begin{equation*}
    \log \frac{\Phi_1(T,x)}{x_1} \geq \frac{N_0T}{C} \frac{1-\beta}{C} \log \frac{n}{C} \left( \frac{T}{1-\beta} \right)^{\frac{1}{1-\beta}} - C\delta.
\end{equation*} Since we can show
\begin{equation}\label{Phi2_est}
    \log \frac{\Phi_2(T,x)}{x_2} \leq -\frac{N_0T}{C} \frac{1-\beta}{C} \log \frac{n}{C} \left( \frac{T}{1-\beta} \right)^{\frac{1}{1-\beta}} + C\delta
\end{equation} in a similar way, we achieve the claim for $n \gtrsim e^{\frac{1}{T}} T^{-\frac{1}{1-\beta}}$. On the other hand, we have for $\beta = 1$ $$\sum_{\ell = \ell_0}^{n-1} \ell^{-1} \xi^{(\ell)} = \frac{1}{C} \sum_{\ell = \ell_0}^{n-1} \ell^{-1} \frac{1}{\log \ell} \geq \frac{1}{C} \log \log \frac{n}{\ell_0} \geq \frac{1}{C} \log \log \frac{n}{C} e^{-\frac{1}{T}}.$$ This implies both \begin{equation*}
    \log \frac{\Phi_1(T,x)}{x_1} \geq \frac{N_0T}{C} \log \log \frac{n}{C} e^{-\frac{1}{T}} - C\delta
\end{equation*}
and
\begin{equation*}
    \log \frac{\Phi_2(T,x)}{x_2} \leq - \frac{N_0T}{C} \log \log \frac{n}{C} e^{-\frac{1}{T}} + C\delta.
\end{equation*}
Then the claim is established when $n \gtrsim e^{e^{\frac{1}{T}}} e^{\frac{1}{T}}$.

To prove the $H^2$ part of Theorem~\ref{thm1}, we claim that
if $\beta$ is sufficiently close to $\frac{1}{2}$, then $u \not \in L^{\infty}([0,\delta];H^2(\bbT^2))$ for any given $\delta > 0$. Once the claim is verified, then the freedom of the choice of $\beta\in(\frac{1}{2},1]$ allows us to witness the dual blow-up in both $C^1$ and $H^2$, closing the proof of Theorem~\ref{thm1}.

It remains to show the claim. We need to estimate \eqref{Phi2_est} to obtain an upper bound which is independent of $T$ and $\beta \sim \frac{1}{2}$. We redefine $T^{(\ell)} > 0$ by the the first time when $\frac{\Phi_2(T^{(\ell)},y)^2}{\Phi_1(T^{(\ell)},y)^2} = 2$ is satisfied for some $y \in D^{(\ell)}$. Then, $e^{-4\gamma^{(n)}(T^{(\ell)})} \sim 2 \frac{y_1^2}{y_2^2} < C_4$ for some $C_4 < 1$ by the smallness of $|e^{\eta^{y}(t)}| \leq e^{C\delta}$ and $\tht_0$. To guarantee the existence of $T^{(\ell)}$, we only consider sufficiently large $\ell$. Since we have $\lim_{\ell \to \infty} T^{(\ell)} = 0$ from the previous result, we can take $\ell_0 = \ell_0(T)$ such that $T^{(\ell)} + \xi^{(\ell)} < T$ for all $\ell \geq \ell_0$. 

Using the above information, we deduce from \eqref{C1_inflation} that
\begin{align*}
    \int_{D^{(\ell)}} \frac{\Phi_1(t,y)\Phi_2(t,y)}{\left|\Phi(t,y)\right|^4} \omega_0(y) \,\ud y &\geq (1-e^{CT^{(\ell)} - 2\gamma^{(\ell)}(T^{(\ell)})}) \int_{D_{+}^{(\ell)}} \frac{\Phi_1(t,y)\Phi_2(t,y)}{\left|\Phi(t,y)\right|^4}  \omega_0(y) \,\ud y \\
    &\geq (1-\sqrt{C_4} e^{C\delta}) \ell^{-\beta} \int_{D_{+}^{(\ell)}} \frac{\Phi_1(t,y)\Phi_2(t,y)}{\left|\Phi(t,y)\right|^4}  \phi^{(\ell)} (y) \,\ud y
\end{align*}  on the time interval $[T^{(\ell)},T^{(\ell)} + \xi^{(\ell)}]$. Since $\Phi(t,y) \sim \Phi(T^{(\ell)},y)$ holds, we have $$\int_{D_{+}^{(\ell)}} \frac{\Phi_1(t,y)\Phi_2(t,y)}{\left|\Phi(t,y)\right|^4} \phi^{(\ell)} (y) \,\ud y \geq e^{-C\delta} \int_{D_{+}^{(\ell)}} \frac{y_1y_2}{|y|^4}  \phi^{(\ell)}(y) \,\ud y,$$
and so there exists a constant $C>0$ not depending on $T$, $\ell$, and $\beta$ such that
\begin{align*}
    \int_{D^{(\ell)}} \frac{\Phi_1(t,y)\Phi_2(t,y)}{\left|\Phi(t,y)\right|^4} \omega_0(y) \,\ud y \geq \frac{1}{C} \ell^{-\beta}.
\end{align*}
Let $x \in D^{(n)}$ with $n > \ell_0$. Recalling that $\xi^{(\ell)} \sim \frac{1}{\ell^{1-\beta}}$ uniformly-in-$\ell$ for $\beta$ close enough to $\frac{1}{2}$, we perform the estimation as
\begin{equation*}
    \int_0^T \int_{\Phi(t,D^{(\ell)})} \frac{y_1y_2}{|y|^4} \omega(t,y) \,\ud y \,\ud t \geq \int_{T^{(\ell)}}^{T^{(\ell)} + \xi^{(\ell)}} \int_{\Phi(t,D^{(\ell)})} \frac{y_1y_2}{|y|^4} \omega(t,y) \,\ud y \,\ud t \geq \frac{1}{C} \ell^{-1}.
\end{equation*}
This yields $$\log \frac{\Phi_2(T,x)}{x_2} = \int_0^T \frac{\ud}{\ud t} \log \Phi_2(t,x) \,\ud t \leq - \frac{1}{C}\sum_{\ell = \ell_0}^{n-1} \ell^{-1} + C\delta \leq -\frac{1}{C} \log \frac{n}{\ell_0} + C\delta,$$ 
and thus, we obtain $$\frac{\Phi_2(T,x)}{x_2} \leq C^* n^{-\frac{1}{C_*}}.$$ 
It is worth emphasizing that both positive constants $C^*$ and $C_*$ on the right-hand side do not depend on $n$, $T$ and $\beta \sim \frac{1}{2}$. Employing Hardy's inequality, we observe that
\begin{equation*}
    \| \omega(T) \|_{H^1(\bbT^2)}^2 \geq \frac{1}{C} \left\| \frac{\omega}{x_2}(T) \right\|_{L^2(\bbT^2)}^2 \geq \sum_{n \geq \ell_0} \left\| \frac{\omega}{x_2}(T) \right\|_{L^2(\Phi(T,D^{(n)}))}^2.
\end{equation*}
A simple inequality $$\int_{\Phi(T,D^{(n)})} \left| \frac{\omega(T,x)}{x_2} \right|^2 \,\ud x = \int_{D^{(n)}} \left| \frac{\omega_0(x)}{x_2} \right|^2 \left| \frac{x_2}{\Phi_2(T,x)} \right|^2 \,\ud x \geq \inf_{x \in D^{(n)}} \left( \frac{x_2}{\Phi_2(T,x)} \right)^2 \int_{D^{(n)}} \left| \frac{\omega_0(x)}{x_2} \right|^2 \,\ud x,$$
holds, which leads to our blow-up rate estimate with respect to $n$ as
\begin{equation}\label{H2_dist}
    \left\| \frac{\omega}{x_2}(T) \right\|_{L^2(\Phi(T,D^{(n)}))}^2 \geq C n^{\frac{2}{C_*}-2\beta}.
\end{equation} Fixing $\beta < \frac{1}{2} + \frac{1}{C_*}$, we clearly obtain $\|  \omega(T) \|_{H^1(\bbT^2)}^2 = \infty$. Since the lower bound does not depend on $T$, we conclude that $$\sup_{t \in (0,T)} \| u(t) \|_{H^2(\bbT^2)} = \infty, \qquad \forall T \in (0,\delta].$$ This completes the proof. \end{proof}

\section*{Acknowledgment}
The authors greatly thank In-Jee Jeong for suggesting this research and for his insightful comments.

\bibliographystyle{amsplain}


\end{document}